\documentclass{amsart}

\usepackage{amsthm,lineno}
\usepackage[markup=underlined]{changes}
\usepackage{amssymb}
\usepackage{verbatim}
\usepackage{amsmath}
\usepackage[colorlinks=true, pdfstartview=FitV, linkcolor=blue, 
            citecolor=red, urlcolor=blue]{hyperref}

\newtheorem{theorem}{Theorem}[subsection]
\newtheorem{proposition}[theorem]{Proposition}
\newtheorem{corollary}[theorem]{Corollary}
\newtheorem{lemma}[theorem]{Lemma}
\theoremstyle{definition}
\newtheorem{definition}[theorem]{Definition}
\theoremstyle{remark}
\newtheorem{remark}[theorem]{Remark}

\newcommand\be{\begin{equation}}
\newcommand\ee{\end{equation}}

\numberwithin{equation}{section}
\definechangesauthor[name=CK, color=orange]{CK}
\definechangesauthor[name=TA, color=magenta]{TA}
\definechangesauthor[name=CK, color=green]{JF}
\usepackage{todonotes}
\setlength{\marginparwidth}{3cm}
\setremarkmarkup{\todo[color=Changes@Color#1!20,size=\scriptsize]{#1: #2}}

\def\Q{\ensuremath {\mathbb{Q}}}
\def\C{\ensuremath {\mathbb{ C}}}
\def\Z{\ensuremath {\mathbb{Z}}}
\def\R{\ensuremath {{\mathbb{R}}}}
\def\f{\ensuremath {{\mathfrak f}}}
\def\a{\ensuremath {{\mathfrak A}}}

\def\E{\ensuremath {{\mathcal E}}}
\def\O{\ensuremath {{\mathcal O}}}
\def\b{\ensuremath {{\mathfrak b}}}

\setcounter{tocdepth}{1}

\title[Eisenstein cocycles over imaginary quadratic fields I]{Eisenstein cocycles over imaginary quadratic fields and special Values of $L$-functions}

\author{Jorge Fl\'orez}
\address{Department of Mathematics, Borough of Manhattan Community College, City University of New York, 199 Chambers Street, New York, NY 10007, USA}
\email{jflorez@bmcc.cuny.edu}

\author{Cihan Karabulut}
\address{Department of Mathematics, William Paterson University, New Jersey 07470, USA} \email{karabulutc@wpunj.edu}

\author{Tian An Wong}
\address{Department of Mathematics and Statistics, Smith College, 10 Elm Street, Northampton, MA 01063, USA}
\email{twong33@smith.edu}

\subjclass[2010]{11F67 (primary) 11F20, 11R42 (secondary)}

\keywords{Special values, Hecke $L$-functions, Eisenstein cocycle}

\date{\today}

\begin{document}

\begin{abstract}
We generalize Sczech's Eisenstein cocycle for $GL(n)$ over totally real extensions of $\Q$ to finite extensions of imaginary quadratic fields. By evaluating the  cocycle on certain cycles, we parametrize complex values of Hecke $L$-functions previously considered by Colmez, giving a cohomological interpretation of his algebraicity result on special values of the $L$-functions.
\end{abstract}

\maketitle

\tableofcontents

\section{Introduction}

\subsection{Eisenstein cocycles over totally real fields}

Let $F$ be a totally real number field of degree $n$, and let $\f$ be an integral ideal of $F$. For a fractional ideal $\b$ of $F$ relatively prime to the conductor $\f$, consider the partial zeta function 

\be \label{SC-partial}
\zeta_\f(s,\mathfrak b)=\sum_{\mathfrak a\sim\mathfrak b}\frac{1}{N_{F/\Q}(\mathfrak a)^s}, \quad \text{Re}(s)>1
\ee
where  the sum runs over integral ideals $\mathfrak a \subset F$ equivalent to $\b$ in the narrow ray class group of $F$ mod $\f$. This function has an analytic continuation to the whole complex plane except for a simple pole at $s=1$. Moreover, a classical result of Klingen and Siegel states that $\zeta_\f(s,\mathfrak b)$ assumes rational values at negative integers. 

In \cite{Scz}, Sczech gives another proof of the Klingen-Siegel rationality theorem by showing that the values of $\zeta_\f(s,\mathfrak b)$  at negative integers admit a cohomological interpretation in terms of the group cohomology of $GL_n(\Q)$ as follows: He constructs a cocycle $\Psi$ on $GL_n(\Q)$, called the Eisenstein cocycle, which represents a nontrivial cohomology class in $H^{n-1}(GL_n(\Q),M_{\Q})$ with values in a space of $\Q$-valued distributions denoted by $M_{\Q}$. He then shows that the Eisenstein cocycle $\Psi$ can be evaluated on certain cycles constructed using totally positive units in $1+\f$ to obtain the values of $\zeta_\f(s,\mathfrak b)$ at $s=-1,-2,\dots,$ thereby giving a different proof of the Klingen-Siegel theorem on the rationality of these values.

In recent years, integral versions of the Eisenstein cocycle have been studied by Charollois, Dasgupta, Spiess and others in \cite{CD}, \cite{CDG}, \cite{DM}. For example, in \cite{CD} the authors refine Sczech's Eisenstein cocycle to a certain $l$-smoothed cocycle, where $l$ is a fixed rational prime, and as a result prove certain integrality properties of values of a smoothed partial zeta function. They then interpolate these values to a $p$-adic zeta function, with $p$ prime to $l$, to reprove the celebrated results of Deligne and Ribet \cite{DR}, Cassou-Nogu\`es \cite{CN}, and Barsky \cite{Bar} about the existence of $p-$adic interpolation of the partial zeta functions of totally real fields. These integrality results further allow for a new construction of the  Deligne-Ribet-Cassou-Nogu\`es $p-$adic zeta functions, which play an important role in the recent proof of the Gross-Stark conjecture by Dasgupta, Kakde and Ventullo in \cite{DKV}.

The question of whether these results can be generalized to other partial Hecke $L$-functions associated to other number fields is therefore of great interest. In this paper, we propose such a generalization to the case where $F$ is a degree $n\geq2$ extension of an imaginary quadratic field $K$. As a first step in this generalization and as a main goal of this paper, we generalize Sczech's construction of the Eisenstein cocycle $\Psi$ to define a cocycle $\Psi_s$ on a subgroup $\Gamma$ of $GL_n(K)$ which represents a nontrivial cohomology class in $H^{n-1}(\Gamma, S)$ with values in a function space $S$ that parametrizes the values of the partial Hecke $L$-functions associated to $F$. Unlike the Klingen-Siegel theorem, the special values in our case are only expected to be algebraic up to a transcendental factor. Indeed, special cases of the algebraicity of these values  were studied by Colmez \cite{Col}, and as such this paper can be viewed as a cohomological interpretation of Colmez's result.

In a subsequent paper \cite{FKW}, we study the integrality properties of $\Psi_s$ and using a similar $l$-smoothing operation as in \cite{CD} we obtain an $l$-smoothed cocycle which can be used to deduce the integrality properties of the special values. This in turn will lead to a $p$-adic $L$-function, related to that obtained by Colmez and Schneps \cite{CS}. (For $n=1$, this is the $p$-adic $L$-function of Manin-Vishik \cite{VM} and Katz \cite{K}.) As before, this analytic method may then be used to study properties of this $p$-adic $L$-function. 

As was already mentioned in \cite{Col}, Harder obtains much more general algebraicity results for the special values of $L$-functions attached to algebraic Hecke characters using his theory of Eisenstein cohomology \cite{Ha1},\cite{Ha2}. These results provide a further step in the proof of Deligne's conjecture about the algebraicity of special values of Hecke $L$-functions attached to algebraic Hecke characters (cf.\cite{HS}). Harder constructs his Eisenstein cohomology classes in a very general setting, using very sophisticated methods, such as the theory of representations of semi-simple Lie groups, their cohomology, and the theory of automorphic forms; in particular, the theory of Eisenstein series and their residues, in the sense of Langlands, play a crucial role in the construction of these cohomology classes.

 On the other hand, we define the Eisenstein cocycle, which gives rise to nontrivial cohomology classes in a very explicit way, using analytic methods. The arithmetic applications of the Eisenstein cocycle with regards to the algebraicity of the special values when compared with Harder's Eisenstein cohomology is somewhat limited due to its explicit nature. However, the recent use of the Eisenstein cocycle in proving Iwasawa theoretic results in the totally real setting by Charollois, Dasgupta, Spiess and others suggests that generalizing the Eisenstein cocycle in our setting should also lead to similarly interesting results.

The relationship between the nontrivial cohomology classes corresponding to the Eisenstein cocycle we construct here and the Eisenstein cohomology classes of Harder is not clear, though it is generally believed that the Eisenstein cohomology classes of Harder coincide with the cohomology classes corresponding to the Eisenstein cocycle constructed here. In fact, in the case $n=2$, the results of Weselmann \cite{Wesel} and Obaisi \cite{Oba} show that the cohomology classes corresponding to the Eisenstein cocycle constructed here coincide with the Eisenstein cohomology classes of Harder. 

Finally, we conclude the introduction by stating our main result.

\subsection{Main result} Let $F$ be an extension of an imaginary quadratic field $K$ of degree $n \geq 2$. Fix an   integral ideal $\f$  of $F$ and denote by $I(\f)$ the group of fractional ideals of $F$ prime to $\f$, and by $P(\f)$ the  group of principal ideals with a generator equivalent to $1$ mod $\f$. Moreover, we denote by $G_{\f}:=I(\f)/P(\f)$ the ray class group of $F$ mod $\f$. We also introduce the group of units $U_\f$ of $F$ which are equivalent to 1 mod $\f$, and we let $V_{\f}$ be the free part of the subgroup of units of $U_{\f}$ having relative norm 1 in the extension $F/K$.

Let $\chi:I(\f)\to \C^\times$ be a Hecke character of type $A_0$, i.e.,
\begin{equation}\label{Chi}
\chi\big((a)\big)=\lambda(a),
\end{equation}
for principal ideals  $(a)$ such that $a\equiv 1\, \text{mod}\, \f$, where 
  $\lambda:F^\times\to K^\times$ is a character defined by
\begin{equation}\label{Lambda}
\lambda(a)=\overline{N_{F/K}(a)}^kN_{F/K}(a)^{-l}.
\end{equation}
Here $k\geq 0$  and $l>0$ are both integers 
such that  $\lambda(\epsilon)=1$ for all units $\epsilon$ in the group $U_{\f}$. For the character $\chi$ we consider the $L$-function
\be
\label{Main-L-function}
 L(s,\chi)=\sum_{(\mathfrak{a}, \f)=1}\frac{\chi(\mathfrak{a})}{N_{F/\Q}(\mathfrak{a})^s},
\ee
 where the sum runs over all integral ideals $\mathfrak{a}$ prime to $\f$, and converges for Re$(s)$ large enough.

For a fix integral ideal $\mathfrak b$ of $F$ prime to $\f$, we also consider the partial Hecke $L$-function
\be\label{Partial-Main-L-function}
 \mathcal{L}_{\f}(\chi ,\mathfrak{b};s)=\sum_{ \mathfrak{a} \sim_{\f} \mathfrak{b} }
 \frac{\chi(\mathfrak{a})}{N_{F/\Q}(\mathfrak{a})^s},
\ee
where the sum runs over all integral ideals $\mathfrak{a}$ equivalent to $\mathfrak{b}$ in the ray class group $G_{\f}$, i.e., $\mathfrak{a}\mathfrak{b}^{-1}=(a)$ for some $a\in \mathfrak{b}^{-1}$ such that $a\equiv 1$ mod $\f$. The Hecke $L$-function can be expressed in terms of the partial Hecke $L$-functions as 
\be\label{Main-Partial}
 L(s,\chi)=\sum_{\mathfrak{b}} \mathcal{L}_{\f}(\chi ,\mathfrak{b};s),
\ee
where  $\b$  is running over  a set  of integral representatives of the ray class group $G_\f$.

Our main result can now be summarized as follows (c.f. Corollary \ref{parametrization} below for the precise formulation):

\begin{theorem}\label{mainthm-intro}
Let $\mathcal{L}_{\f}(\chi ,\mathfrak{b};s)$ be the partial Hecke $L$-function defined in Equation \eqref{Partial-Main-L-function}. Then there exists a cocycle $\Psi_s$ in the group cohomology of $\Gamma$ where $n\geq 2$ and a cycle $\E$ in the group homology of $V_\f$ such that 

\begin{equation}
\mathcal{L}_{\f}(\chi ,\mathfrak{b};s)=\frac{\chi(\b)}{N_{F/\Q}(\b)^s}\, C_{F/K,\chi}\,\Psi_s(\E)
\end{equation}
for $\mathrm{Re}(s)>1+\frac{k}{2}$, where  $C_{F/K,\chi}$ is a constant depending on the field extension $F/K$ and the character $\chi$. 
\end{theorem}

\begin{remark}
	The case $n=2$, that is, when $F$ is a quadratic extension of $K$ was first considered in the unpublished thesis of Obaisi \cite{Oba}, whose work we summarize in Section \ref{sec2} below. Most importantly, in this case $\Psi_s$ parametrizes all values of $L(s,\chi)$, whereas for  $n>2$ we are restricted to the region of convergence above. This is in contrast to the result of Sczech for totally real fields, whose Eisenstein cocycle parametrizes the values of partial zeta function $\zeta_\f(s,\mathfrak b)$  defined in \eqref{SC-partial}  only at \textit{integer} arguments, whereas our construction holds for \textit{complex} arguments. This added flexibility is due to the introduction of a convergence factor which is described in equation \eqref{Convergence-factor}.
\end{remark}

Although the setting that we work in is the precise analogue of the totally real extensions of degree $n$ considered by Sczech, in the sense that the rank of the unit group of $F$ is again $n-1$, and these units of infinite order play a key role in the parametrization of the $L$-function, it should be emphasized that Sczech's work over totally real fields does not generalize immediately to our setting due to the following problems: Firstly, the convergence of the cocycle is no longer obtained using the so-called $Q$-summation trick of Sczech, but instead we introduce a convergence factor inspired by Colmez's work. This greatly simplifies our exposition as a major part of \cite{Scz} is devoted to this $Q$-summation. Secondly, for extensions of an imaginary quadratic field one can no longer choose a system of totally positive units, a key condition in the parametrization in \cite[Lemma 6]{Scz}. We prove the analogue of this in Lemma \ref{det} below, which represents the technical heart of this paper. As discussed in \cite[p.597]{Scz}, a similar parametrization is obtained in \cite[Chapitre III]{Col} but is `rather complicated' as it does not use the cocycle property. In fact, our proof holds for any number field; for this reason it may be interesting to ask how Sczech's construction may apply to general number fields.

This paper is organized as follows: In Section \ref{sec3}, we summarize the work of Obaisi which is the case $n=2$ and proceed to construct the Eisenstein cocycle for $n>2$. In Section \ref{sec4}, we construct the cycle $\E$ on which the cocycle $\Psi_s$ is to be evaluated and prove the Theorem \ref{mainthm-intro}. Section \ref{proof-lemma} finally proves the key Lemma \ref{det} required for the parametrization.

\section{The Eisenstein cocycle {$\Psi_s$}}

\label{sec3}

\subsection{The case {$n=2$}}

\label{sec2}

In this section we describe the case $n=2$, in preparation for the general case. We recall that this case was studied in the unpublished thesis of Obaisi \cite{Oba}. We note that the cocycle described here is an inhomegeneous cocycle, whereas in the general case it is more convenient to work with a homogeneous cocycle, as is done in \cite{Scz}.

\subsubsection{The rational cocycle}

Let $\sigma_1,\sigma_2$ be two nonzero column vectors in $\C^2$ and $\sigma=(\sigma_1,\sigma_2)$, and $x$ a row vector in $\C^2$. Now consider the function,
\be
f(\sigma_1,\sigma_2)(x)=\frac{\det(\sigma_1,\sigma_2)}{\langle x,\sigma_1\rangle\langle x,\sigma_2\rangle}
\ee
defined outside the hyperplanes $\langle x,\sigma_1\rangle=0$ and $\langle x,\sigma_2\rangle=0$. For any $A$ in $GL_2(\C)$, we see that $f(A\sigma)(x)=\det(A)f(\sigma)(xA).$

Let $H\subset \C[x_1,x_2]$ be the space of homogeneous polynomials of degree $g$. Then for any $P$ in $H$, we extend $f$ to $H\times \C^2\backslash\{0\}$ as follows:
\be
f(\sigma)(P,x)=P(-\partial_{x_1},-\partial_{x_2})f(\sigma)(x),
\ee
where $\partial_{x_i}$ denotes the partial derivative with respect to $x_i$. 
 We define the action of $A\in GL_2(\C)$ on $P$ by $AP(x)=P(xA)$. This allows us to define an action of $A\in GL_2(\C)$  on $f$ by
  $Af(\sigma)(P,x):=\det(A)f(\sigma)(A^TP,xA)$. 
  This action is homogeneous, that is,  
\be
f(A\sigma)(P,x)=Af(\sigma)(P,x).
\ee

Now define a left 1-cocycle $\psi$ with values in the space of complex-valued functions on $ H\times \C^2\backslash\{0\}$ by
\be
\psi(A)(P,x)=f(e_1,Ae_1)(P,x)
\ee
where $e_1=(1,0)^T$ and $x$ is nonzero. Note that this definition holds only if the denominators  $\langle x,\sigma_1\rangle,\langle x,\sigma_2\rangle$ are nonzero, otherwise one introduces an alternative definition so that a well-defined expression is obtained. See \cite[p.371]{Scz2} for the case of $GL_2(\Q)$ (see \cite[p.16]{Oba}).

\subsubsection{The Eisenstein cocycle}

Fix a quadratic extension $F$ of $K$. We want to sum the 1- cocycle $\psi$ to produce the desired 1-cocycle $\Psi_s$. Let $\Lambda_1,\Lambda_2$ be lattices in $K$ with the same multiplier ring $\O_K\neq \Z$.  Let $R_2$ be the set of matrices $M$ in $GL_2(F)$ whose columns are conjugate over $K$. Let $S'$ be the space of complex-valued function $f$ on $H\times K^2/(\Lambda_1\times\Lambda_2)\times R_2$. We let $\Gamma$ be the following subgroup of $GL_2(K)$ 

$$\Gamma=\left\{\, A \in  \begin{pmatrix}
\Lambda_1\Lambda_1^{-1} &\Lambda_2\Lambda_1^{-1} \\ \Lambda_1\Lambda_2^{-1}&\Lambda_2\Lambda_2^{-1}
\end{pmatrix} \, \bigg| \, \det(A)\in \O_{K}^{\times} \,\right\}\,.$$

Then $\Gamma$ acts on $S'$ by
\be
Af(P,{\bf u},M)=\det(A)f(A^TP,{\bf u}\,A,\,A^{-1}M).
\ee
Define the Eisenstein cocycle
\be\label{OB}
\Psi_s(A)(P,{\bf u},M)=\sum_{x\in{\bf\Lambda}+{\bf u}}\psi(A)(P,x)\Omega_s^k(x,M)
\ee
where ${\bf u}\in K^2$, ${\bf \Lambda}=\Lambda_1\times\Lambda_2$, $k\geq 0$ an integer, and
\be
\Omega_s^k(x,M)=\prod_{i=1}^2\frac{\overline{xM_i}^k}{|xM_i|^{2s}}
\ee
which, by a result of Colmez \cite{Col}, converges absolutely for Re$(s)\gg k$, and in fact has analytic continuation as a function of $s$. It can then be shown that $\Psi_s$ is a 1-cocycle for all $s$, thus represents a cohomology class in $H^1(\Gamma,S')$, c.f. Theorem \ref{cohomologyclass}.

\subsubsection{Parametrizing the partial $L$-function} Obaisi shows that the values of the partial Hecke $L$-function $\mathcal{L}_{\f}(\chi ,\mathfrak{b};s)$ associated to  the quadratic extension $F$ of $K$ can be parametrized by evaluating the Eisenstein cocycle, defined in (\ref{OB}), for specific choices of $A,P,{\bf u},\text{\ and\ }M$. We shall now describe these specific parameters on which the Eisenstein cocycle is evaluated to achieve the parametrization. 

We fix an ideal $\mathfrak{b}$ of $F$ prime to $\f$. By \cite[Theorem 81.3]{Ome}, for the ideal $\mathfrak{fb}^{-1}$ there exists a basis $m_1, m_2$ of the extension $F/K$, as well as integral  ideals $\Lambda_1, \Lambda_2$ of $\mathcal{O}_K$, such that
\begin{equation}
\mathfrak{fb}^{-1}=\Lambda_1m_1+\Lambda_2m_2.
\end{equation}

Let ${\bf u}=(u_1,u_2)\in K^2$ such that $1=m_1u_1+m_2u_2$. Let $V_\f$ be a cyclic subgroup of finite index in $U_\f$  with generator $\epsilon$ such that $N_{F/K}(\epsilon)=1$. The matrices $M, A$ are defined as  $$M=(M_1,M_2) \quad \text{and}\quad \quad A=M\begin{pmatrix}
\epsilon &0\\0&\epsilon'
\end{pmatrix}M^{-1},$$ where $M_1:=(m_1, m_2)^T$, $M_2$ is the conjugate of $M_1$ in $F/K$ and $\epsilon'$ is the conjugate of $\epsilon$ in $F/K$. Finally, the polynomial $P$ is defined to be $$P(x_1,x_2)=(xM_1^{-T})(xM_2^{-T})$$
 where $x=(x_1,x_2)\in \C^2$ and $M_i^{-T}$ denotes the $i$th column of the matrix $M^{-T}$. The following identity, which is Theorem 9 of {\cite{Oba}}, gives the aforementioned parametrization
\be\label{OB-parametrize}
-[U_\f:V_\f]((l-1)!)^2\det(M) \mathcal{L}_{\f}(\chi,\b;s)=\frac{\chi(\b)}{N_{F/\Q}(\b)^s}\,\Psi_s(A)(P^{l-1},{\bf u},M)
\ee
for all $s\in\C$, where $l>0$ is an integer. In view of equation (\ref{Main-Partial}), the identity (\ref{OB-parametrize}) implies that the Eisenstein  cocycle also parametrizes the values of $L(s,\chi)$.

\subsubsection{Algebraicity} The parametrization given in equation (\ref{OB-parametrize}) can be used to obtain an algebraicity result for the special value of Hecke $L$-functions at $s=0$. Obaisi shows that at $s=0$, the Eisenstein cocycle can be expressed as a finite linear combination of products of Eisenstein-Kronecker series, whose algebraicity properties are known by Damerell \cite{D} (see also \cite[Ch.VIII \S15]{W} and \cite[Chap II.0]{Col}). This, together with the identity (\ref{OB-parametrize}) and the equation (\ref{Main-Partial}), immediately implies that the special value $L(0,\chi)$ is algebraic up to a transcendental factor, namely 
\[
L(0,\chi)\in \Omega_{\infty}^{2(k+l)}\,\pi^{-2k}\overline{\mathbb{Q}},
\]
where $\Omega_{\infty}$ is the real period of an elliptic curve defined over $\overline{\mathbb{Q}}$ having complex multiplication by $K$.

\begin{remark}
We would like to point out that Sczech also uses a similar approach in \cite{Scz} to prove the rationality of special values of the partial zeta function $\zeta_\f(s,\mathfrak b)$. This is done by expressing the Eisenstein cocycle in terms of a generalization of the classical Dedekind sums, which are shown to be rational up to a transcendental factor, thus recovering the classical rationality result  of Klingen and Siegel. 
\end{remark}

\subsection{The rational  cocycle}
Having sketched the case $n=2$, we are now ready for the general setting. Throughout we will let $K$ be a fixed imaginary quadratic extension of $\Q$. We first construct an ($n-1$)-cocycle of $GL_n(\C)$ acting on a certain space of functions defined below, following the construction of Sczech for $GL_n(\Q)$, which will allow us to construct the Eisenstein cocycle on $GL_n(K)$.

\begin{definition}\label{function-f}Let $\sigma_1,\dots,\sigma_n$ be the columns of a $n\times n$ matrix $\sigma$ in $GL_n(\C)$. Similar to $n=2$ case, we start with the rational function
\be\label{def-f-1}
f(\sigma)(x)=\frac{\det(\sigma_1,\dots,\sigma_n)}{\langle x,\sigma_1\rangle\dots\langle x,\sigma_n\rangle}
\ee
of a row vector $x$ in $\C^n$, and extend $f$ to $H\times \C^n\backslash\{0\}$ 
\begin{equation}\label{Def-of-f}
f(\sigma)(P,x)=P(-\partial_{x_1},\dots,-\partial_{x_n})f(\sigma)(x)
\end{equation}
where $H$ is the space of homogeneous polynomials $P$ of degree $g\geq1$ in $n$ variables over $\C$, and
where the $\partial_{x_j}$ are the partial derivatives with respect to the variable $x_j$. This function is well-defined outside the hyperplanes $\langle x,\sigma_j\rangle=0$, for all $j=1,\dots,n$. When $\langle x,\sigma_j\rangle=0$ we set $f(\sigma)(x)=0$. Notice that $f(\sigma)$ does not change if $\sigma_j$ is replaced by $\lambda\sigma_j$ for any $\lambda\in \C^\times$, so we may view $\sigma_j$ as points in projective space ${\mathbb P}^{n-1}(\C)$.
\end{definition}

We define the action of $A\in GL_n(\C)$ on a polynomial $P(x)\in H$ by
\be\label{Action-Matrix-poly}
AP(x)=P(xA), \quad x=(x_1,\dots,x_n).
\ee

 The following lemma gives some properties of $f$: 
\begin{lemma}
\label{psi}
The function $f(\sigma)(P,x)$ satisfies the following properties:
\begin{enumerate}
\item
Let $\sigma_0,\dots,\sigma_n$ be nonzero column vectors in $\C^n$. Then
\be
\sum_{i=0}^n(-1)^if(\sigma_0,\dots,\hat{\sigma}_i,\dots,\sigma_n)=0.
\ee
\item
Given $A$ in $GL_n(\C)$, we have 
\be
Af(\sigma)(P,x)=f(A\sigma)(P,x),
\ee
where the action is given by $Af(\sigma)(P,x)=\det(A)f(\sigma)(A^TP,xA).$ 
\end{enumerate}
\end{lemma}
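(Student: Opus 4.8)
The lemma has two parts that I will prove separately.

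\textbf{Part (1): the cocycle relation.} The plan is to prove the identity $\sum_{i=0}^n (-1)^i f(\sigma_0,\dots,\hat\sigma_i,\dots,\sigma_n)=0$ first at the level of the rational function $f(\sigma)(x)$ (the case $P=1$, $g=0$), and then deduce the general statement for $f(\sigma)(P,x)$ by applying the differential operator $P(-\partial_{x_1},\dots,-\partial_{x_n})$ to both sides, since this operator is linear and commutes with the finite alternating sum over $i$. For the rational-function case, I would clear denominators: writing $L_j(x)=\langle x,\sigma_j\rangle$, each summand is $\det(\sigma_0,\dots,\hat\sigma_i,\dots,\sigma_n)\big/\prod_{j\neq i} L_j(x)$. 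Multiplying through by the common denominator $\prod_{j=0}^n L_j(x)$ reduces the claim to the polynomial identity $\sum_{i=0}^n (-1)^i L_i(x)\,\det(\sigma_0,\dots,\hat\sigma_i,\dots,\sigma_n)=0$ for all $x\in\C^n$. This is exactly the Laplace/cofactor expansion identity: fixing any auxiliary row vector and viewing $\langle x,\sigma_i\rangle$ as pairing $x$ against the $i$-th column, the expression is the expansion of the determinant of an $(n{+}1)\times(n{+}1)$ matrix with a repeated row, hence vanishes. I would verify the signs carefully, as the alternating-sign bookkeeping in the Laplace expansion is where errors creep in.

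\textbf{Part (2): the equivariance.} The claim is $\det(A)\,f(\sigma)(A^T P, xA) = f(A\sigma)(P,x)$. I would again begin with $P=1$. Here the columns of $A\sigma$ are $A\sigma_j$, and $\langle x, A\sigma_j\rangle = \langle xA, \sigma_j\rangle$ (transposing $A$ across the pairing), while $\det(A\sigma_1,\dots,A\sigma_n)=\det(A)\det(\sigma)$; substituting these into the definition of $f(A\sigma)(x)$ immediately gives $f(A\sigma)(x)=\det(A)\,f(\sigma)(xA)$, which is the $g=0$ case of the identity already recorded in the $n=2$ discussion. The main work is propagating this through the differential operator. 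I would use the chain rule: under the linear substitution $y=xA$, the operators transform as $\partial_{x_j}=\sum_k A_{jk}\,\partial_{y_k}$, so that $P(-\partial_x)$ acting on a function of $xA$ equals $(A^T P)(-\partial_y)$ evaluated at $y=xA$, where $A^T P$ is precisely the action $AP(x)=P(xA)$ applied with the transpose. This is the conceptual heart: verifying that the polynomial $P\mapsto A^T P$ arising from the chain rule matches the action defined just before the lemma.

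\textbf{Main obstacle.} I expect the principal difficulty to be the second part, specifically the bookkeeping that shows the chain rule converts $P(-\partial_x)$ applied to $f(\sigma)(xA)$ into $(A^T P)(-\partial)$ applied to $f(\sigma)$. One must be careful that the operator acts on the $x$-dependence through \emph{both} the polynomial and the argument $xA$, and that the transpose appears in the right place so that the defined action $A^T P(x)=P(x A^T)$ (as opposed to $P(xA)$) comes out correctly. I would set this up cleanly by treating $f(\sigma)(P,x)$ as a pairing between the differential operator $P(-\partial_x)$ and the kernel $f(\sigma)(x)$, and checking the transformation abstractly for the monomial basis of $H$ before asserting it for general $P$ by linearity. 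The signs in Part (1) are a secondary but nontrivial source of potential error.
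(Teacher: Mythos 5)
Your proposal is correct and is essentially the argument the paper itself relies on: the paper's proof just cites Sczech (pp.~586--587 of \cite{Scz}), whose argument is exactly what you reconstruct --- clearing denominators to reduce (1) to the vanishing of an $(n+1)\times(n+1)$ determinant and then applying $P(-\partial_{x_1},\dots,-\partial_{x_n})$ by linearity, and for (2) the base identity $f(A\sigma)(x)=\det(A)f(\sigma)(xA)$ pushed through the differential operator by the chain rule, with the action $A^TP(x)=P(xA^T)$ coming out precisely as you describe. Two minor refinements: the cleared-denominator determinant vanishes because its first row $(\langle x,\sigma_0\rangle,\dots,\langle x,\sigma_n\rangle)$ is a linear combination of the remaining $n$ rows formed by $(\sigma_0,\dots,\sigma_n)$, not because a row is literally repeated; and identity (1) is to be read as an identity of rational functions on the locus where all $\langle x,\sigma_j\rangle\neq 0$ (as in \cite{Scz}), since with the paper's convention that $f(\sigma)(x)=0$ at degenerate points the pointwise statement can fail there.
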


\begin{proof}
These properties follow the same argument as in \cite[pp.586-587]{Scz}, observing that the argument is independent of the coefficient field. For (2), we use the fact that
\be
f(A\sigma)(x)=\det(A)f(\sigma)(xA)
\ee
which follows directly from the definition of $f$.
\end{proof}

\begin{remark}
As a corollary to property (\ref{Def-of-f}), our function can be expressed as
\begin{equation}
\label{Pr}
f(\sigma)(P,x)=\det(\sigma)\sum_r P_r(\sigma)\prod^n_{j=1}\frac{r_j!}{\langle x,\sigma_k\rangle^{1+r_j}}
\end{equation}
where $r$ runs over all partitions of deg$(P)=r_1+\dots+r_n$ into nonnegative parts $r_j\geq 0$, and $P_r(\sigma)$ is a homogeneous polynomial in $\sigma_{ij}$ defined by the series expansion
\be\label{Pr-2}
P(y\sigma^T)=\sum_r P_r(\sigma)\prod_{j=1}^n y^{r_j}_j
\ee
with $\sigma^T$ denoting the transpose of the matrix $\sigma$. In the case of a divided power $P(x)=x_1^{(g_1)}\dots x_n^{(g_n)}$ with $x^{(k)}=x^k/k!$, we have
\be\label{Pr-coeff}
P_r(\sigma)=\sum_{} \prod_{i,j=1}^n\sigma_{ij}^{r_{ij}}
\ee
where the sum runs over simultaneous decompositions 
$r_j=r_{1j}+\dots +r_{nj}, r_{ij}\geq 0$, for $j=1,\dots,n$ satisfying
$r_{i1}+\dots +r_{in}=g_i$.
\end{remark}

\begin{definition}(The rational cocycle.)
Next define an $n$-tuple $(A_1,\dots,A_n)$ with $A_i$ in $GL_n(\C)$, and write $A_{ij}$ for the $j$-th column of $A_i$. Given any $k=1,\dots,n$ and nonzero vector $x\in \C^n$, there is a smallest index $j_k$ such that $\langle x,A_{kj_k}\rangle\neq0$.

Consider the space of complex valued functions
\begin{equation}\label{S0}
S_0=\{f: H\times \C^n\to \C\}.
\end{equation}
 An element $A$ in $GL_n(\C)$ acts on $S_0$ as before by 
\begin{equation}\label{Action-of-A-on-f}
Af(\sigma)(P,x)=\det(A)f(\sigma)(A^TP,xA),
\end{equation}
and $AP(X)=P(XA)$ for $X=(X_1,\dots,X_n)$. Now, define the rational cocycle by a map on $n$ copies
\begin{equation}
\label{Rational-cocycle}
\psi:GL_n(\C)\times\dots\times GL_n(\C)\to S_0
\end{equation}
sending
\begin{equation}\label{Def-of-psi}
(A_1,\dots,A_n)\mapsto f(A_{1j_1},\dots,A_{nj_n})(P,x)
\end{equation}
which we will denote for short by $\psi(A_{1},\dots,A_{n})(P,x)=\psi(\a)(P,x)$, where
$\a$ stands for the $n$-tuple $(A_{1},\dots,A_{n})$. If $x=0$, we set $\psi(A_1,\dots,A_n)(P,x)=0.$
\end{definition}
By Lemma \ref{psi}, $\psi$ is a homogenous $(n-1)$-cocycle on $GL_n(\C)$, i.e., 
\begin{equation}\label{homo}
\psi(A\a)=A\psi(\a),
\end{equation}
and
\begin{equation}\label{cocy}
\sum_{i=0}^{n}(-1)^i\psi(A_0,\dots, \hat{A_i},\dots, A_n)=0.
\end{equation}
This $\psi$ is the analogue of the rational cocycle constructed by R. Sczech for $GL_n(\Q)$. We will use this cocycle to construct the Eisenstein cocycle for an imaginary quadratic field.

\begin{theorem}
The map $\psi$ represents a cohomology class in $H^{n-1}(GL_n(\C),S_0).$ 
\end{theorem}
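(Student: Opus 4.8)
The plan is to read the theorem as a purely formal consequence of the structural properties already recorded for $\psi$, by matching those properties against the definition of group cohomology in terms of the homogeneous bar complex. Recall that for a group $G$ and a left $G$-module $M$, the homogeneous $q$-cochains are the $G$-equivariant maps $c\colon G^{q+1}\to M$, i.e. those satisfying $c(Ag_0,\dots,Ag_q)=A\,c(g_0,\dots,g_q)$ for all $A\in G$, with coboundary $(\delta c)(g_0,\dots,g_{q+1})=\sum_{i=0}^{q+1}(-1)^i c(g_0,\dots,\hat g_i,\dots,g_{q+1})$, and $H^q(G,M)$ is the cohomology of this complex. Taking $G=GL_n(\C)$, $q=n-1$, and $M=S_0$, an $(n-1)$-cocycle is exactly a $G$-equivariant map $G^n\to S_0$ annihilated by $\delta$; these two requirements are precisely conditions \eqref{homo} and \eqref{cocy}. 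Thus the theorem amounts to checking that $\psi$ is a well-defined cochain satisfying both.

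First I would verify that \eqref{Action-of-A-on-f} genuinely makes $S_0$ into a left $GL_n(\C)$-module, i.e. that $(AB)f=A(Bf)$ and $\mathrm{Id}\cdot f=f$. Using the specialization $A^TP(x)=P(xA^T)$ of the action $AP(x)=P(xA)$, together with multiplicativity of the determinant, one computes $A(Bf)(\sigma)(P,x)=\det(A)\det(B)\,f(\sigma)\bigl(B^TA^TP,\,xAB\bigr)=\det(AB)\,f(\sigma)\bigl((AB)^TP,\,x(AB)\bigr)=(AB)f(\sigma)(P,x)$, where the step $B^TA^TP=(AB)^TP$ is the only bookkeeping needed; the identity axiom is immediate. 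This confirms that $\psi\colon GL_n(\C)^n\to S_0$ from \eqref{Rational-cocycle} lands in an honest $G$-module.

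Next I would confirm the two defining conditions. For equivariance \eqref{homo}, the key observation is that the pivot indices $j_k$ (the smallest index with $\langle x,A_{kj_k}\rangle\neq0$) are unchanged when one replaces the tuple $\a$ by $A\a$ and simultaneously passes from $x$ to $xA$, because $\langle x,AA_{kj}\rangle=\langle xA,A_{kj}\rangle$; granting this, part (2) of Lemma \ref{psi} applied to the selected columns yields $\psi(A\a)=A\psi(\a)$. For the cocycle relation \eqref{cocy}, one fixes a nonzero $x$, selects the pivotal column from each matrix, and applies part (1) of Lemma \ref{psi} to the resulting $n+1$ column vectors, so that the alternating sum vanishes termwise. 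Both arguments are those indicated in \cite[pp.586--587]{Scz}, which transport verbatim since they are insensitive to the coefficient field.

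The only genuine subtlety, and the step I would treat most carefully, is the compatibility of the $x$-dependent pivot selection with the face maps appearing in \eqref{cocy}: deleting a matrix $A_i$ may change which column is pivotal for some of the remaining matrices, so one must check that the $(n+1)$-tuple of columns feeding into part (1) of Lemma \ref{psi} is the correct one uniformly in $x$, and that the degenerate loci $\langle x,\sigma_j\rangle=0$ (where $f$ is set to $0$) do not spoil the identity. Once this is settled, \eqref{homo} and \eqref{cocy} together show that $\psi$ is a homogeneous $(n-1)$-cocycle, and therefore defines a class in $H^{n-1}(GL_n(\C),S_0)$, completing the proof.
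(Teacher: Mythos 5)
Your proposal is correct and follows essentially the same route as the paper, whose proof simply cites Lemma \ref{psi}: properties (1) and (2) of that lemma give exactly the relations \eqref{homo} and \eqref{cocy}, which identify $\psi$ as a homogeneous $(n-1)$-cocycle and hence a class in $H^{n-1}(GL_n(\C),S_0)$. Your additional bookkeeping (checking the module axioms for the action \eqref{Action-of-A-on-f}, and noting that the pivot indices $j_k$ transform compatibly because $\langle x, AA_{kj}\rangle = \langle xA, A_{kj}\rangle$) is exactly the content the paper leaves implicit by deferring to \cite[pp.586--587]{Scz}.
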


\begin{proof}
This follows immediately from the properties of $\psi$ described in Lemma \ref{psi}. 
\end{proof}

\begin{remark}
(Alternate expression for $\psi$.) We may rewrite $\psi$ as follows: let $d=(d_1,\dots,d_n)$ be an $n$-tuple of integers with $1\leq d_j\leq n$, and consider the space $A(d)\subset \C^n$ generated by all columns $A_{ij}$ with $j< d_i$, and let $A(d)^\perp$ be the orthogonal complement. Then define
\begin{equation}
\label{X(d)}
X(d)=A(d)^\perp\backslash\bigcup^n_{i=1} A^\perp_{id_i},
\end{equation}
so that if $X(d)$ is nonempty, then it is a subspace of $\C^n$ with a finite number of codimension one subspaces removed. Let
\begin{equation}\label{D}
D=D(A_1,\dots,A_n)=\{d:X(d)\neq \varnothing \}.
\end{equation}
Then by construction of $X(d)$, we can associate with $(A_1,\dots,A_n)$ a finite decomposition of $\C^n$ into a \textit{ disjoint} union
\be
\C^n\backslash\{0\}=\bigcup_{d\in D} X(d).
\ee
In terms of this decomposition, the definition of $\psi$ can be restated as
\begin{equation}
\label{psi2}
\psi(A_{1},\dots,A_{n})(P,x)=f(A_{1d_{1}},\dots,A_{nd_n})(P,x)
\end{equation}
for $x$ in $X(d)$. The cardinality of $D$, i.e, the number of nonempty sets $X(d)$ depends in general on the matrices $A_i$, but we simply observe that the cardinality of $D$ is bounded above by the generic case, that is, when
\be
\dim A(d)=\sum_i (d_i-1)
\ee
for all nonempty $X(d)$. In this case every $d\in D$ satisfies $\sum d_i>2n$, hence $D$ is finite.
\end{remark}

\subsection{The Eisenstein cocycle} 

We now use $\psi$ to construct the cocycle $\Psi_s$ that will parametrize the values of Hecke $L$-functions. 

\begin{definition}\label{Eisenstein-cocycle}
(The Eisenstein cocycle.)
Let ${\bf \Lambda}=\Lambda_1\times\dots\times \Lambda_n$ be a product of $n$ lattices in $K$ with the same ring of multipliers $\mathcal{O}_K$ in $K$. Consider the space $S$ of functions
\begin{equation}\label{S}
S=\{f:H\times K^n/{\bf \Lambda}\times R_n\to \C\}
\end{equation}
where $R_n$ is the set of matrices $M$ in $GL_n(\C)$ such that there exists a degree $n$ extension $F/K$  such that the columns
\be
M_1,\dots,M_n
\ee
are conjugate over $K$. That is,
\begin{equation}
\label{M}
M=\begin{pmatrix}\rho_1(m_1)& \dots& \rho_{n}(m_1)\\ 
\vdots  & \ddots & \vdots\\
\rho_1(m_n) & \dots & \rho_{n}(m_n) \end{pmatrix}
\end{equation}
where $m_1,\dots, m_n\in K$ and the $\rho_i$ are distinct embeddings of $F$ into $\C$, fixing $K$. In particular, all the coefficients of $M$ are nonzero. As before we define $\Gamma$ to be the following subgroup of $GL_n(K)$

$$\Gamma=\left\{\, A \in  \begin{pmatrix}
\Lambda_1\Lambda_1^{-1} &\dots &\Lambda_n\Lambda_1^{-1}\\
\vdots  & \ddots & \vdots\\
\Lambda_1\Lambda_n^{-1} &\dots &\Lambda_n\Lambda_n^{-1}
\end{pmatrix} \, \bigg| \, \det(A)\in \O_{K}^{\times} \,\right\}\,.$$

 The action of $A$ in $\Gamma$ on any $f$ in $S$ is given by
\begin{equation}
Af(P,{\bf u},M)=\det(A)f(A^TP,{\bf u}\,A,A^{-1}M).
\end{equation}

Now define the \textit{Eisenstein cocycle} to be a map $\Psi_s$ from $n$ copies of $\Gamma$ to the space of complex valued functions on $S$, 
\be
\Psi_s:\Gamma\times \dots\times \Gamma\to S
\ee 
by 
\begin{equation}
\label{Szcech-cocycle-sum}
\Psi_s(\a)(P,{\bf u},M)=\sum_{x\in {\bf \Lambda}+{\bf u}}\psi(\a)(P,x)\Omega^k_s(x,M),
\end{equation}
where ${\bf u} \in K^n$ and
\begin{equation}\label{Convergence-factor}
\Omega^k_s(x,M)=\prod_{i=1}^n \frac{\overline{xM_i}^k}{|xM_i|^{2s}}.
\end{equation}
The factor (\ref{Convergence-factor}) will be referred to as the \textit{ convergence factor}. As we shall see from Theorem \ref{convergence-theorem} later, this map is well-defined. That is, the right-hand side of \eqref{Szcech-cocycle-sum} converges absolutely for $\mathrm{Re}(s)> 1+\frac{k}{2}$, with $k\geq 0$, independently of $P,{\bf u},$ and $M$.
\end{definition}

\begin{theorem}
\label{cohomologyclass}
Suppose that $s\in\C$ satisfies $\mathrm{Re}(s)> 1+\frac{k}{2}.$ The Eisenstein cocycle $\Psi_s$ represents a nontrivial cohomology class in the group $H^{n-1}(\Gamma,S)$, which we denote by $[\Psi_s]$.
\end{theorem}

\begin{proof}
Let $A_0,\dots,A_n$ be elements in $\Gamma$. Since $\psi$ is a $(n-1)$-cocycle, we know that
\be
\sum_{i=0}^n(-1)^i\psi(A_0,\dots,\hat{A_i},\dots,A_n)(P,x)=0
\ee
for a fixed $P$ and $x$. Multiplying by the convergence factor $\Omega_s^k(x,M)$ and summing over $x$ in ${\bf \Lambda}+\bf u$ we have
\be
\sum_{x\in{\bf \Lambda}+\bf u}\sum_{i=0}^n(-1)^i\psi(A_0,\dots,\hat{A_i},\dots,A_n)(P,x)\,\Omega_s^k(x,M)=0,
\ee
where the sum converges for Re$(s)$ large enough. Then interchanging the sums and using the definition of $\Psi_s$ we have
\be
\sum_{i=0}^n(-1)^i\Psi_s(A_0,\dots,\hat{A_i},\dots,A_n)(P,x)=0,
\ee
therefore $\Psi_s$ is a cohomology class in $H^{n-1}(\Gamma,S)$.

To see that $\Psi_s$ is nontrivial, we use the property that $\Psi_s$ 
parametrizes  the $L$-function $\mathcal{L}_{\f}(\chi,\mathfrak{b};s)$, from equation (\ref{Partial-Main-L-function}),  as follows.
Fix $\f$, $\b$, $P$, ${\bf u}$ and $M$ as in Corollary \ref{parametrization}.
Then pairing $\Psi_s(.)(P,{\bf u},M)\in H^{n-1}(\Gamma, \C)$ with the cycle $\E=\E[\b,M]\in H_{n-1}(\Gamma,\Z)$ (cf. (\ref{the cycle})), via the nondegenerate bilinear pairing
\be
\langle \cdot,\cdot \rangle : H^{n-1}(\Gamma, \C) \times H_{n-1}(\Gamma,\Z)\to H_0(\Gamma,\C) = \C,
\ee
 we have, after  Corollary \ref{parametrization}, that 
 \be
 \langle [\Psi_s(.)(P,{\bf u},M)],[\E] \rangle=\Psi_s(\E)(P,{\bf u},M)
 \ee
 parametrizes   the partial $L$-function $\mathcal{L}_{\f}(\chi,\mathfrak{b};s)$. In particular, since $\mathcal{L}_{\f}(\chi,\mathfrak{b};s)$ is non-trivial, then 
 \be
 \langle [\Psi_s(.)(P,{\bf u},M)],[\E] \rangle\neq 0,
 \ee
 and so the cocycle $\Psi_s$ is non-trivial as well.
\end{proof}

\subsection{Convergence}

We conclude this section by determining a domain of absolute convergence for the cocycle $\Psi_s$, with the aid of the convergence factor. First, we make the following observation:

\begin{lemma}
\label{factor}
Let $x$ be a row vector in $K^n$, $M\in R_n$ and $\sigma\in GL_{n}(\mathcal{O}_K)$. Then
\be
\Omega_s^k(x\sigma,M)=\Omega_s^k(x,\sigma M).
\ee
\end{lemma}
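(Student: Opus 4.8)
We must show that for a row vector $x\in\C^n$, a matrix $M\in R_n$, and $\sigma\in GL_n(\O_F)$, the convergence factor satisfies
\[
\Omega_s^k(x\sigma,M)=\Omega_s^k(x,\sigma M).
\]

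**The plan.** The proof is a direct computation unwinding the definition (\ref{Convergence-factor}). First I would expand the left-hand side: by definition $\Omega_s^k(x\sigma,M)=\prod_{i=1}^n \overline{(x\sigma) M_i}^{\,k}/|(x\sigma)M_i|^{2s}$, where $M_i$ is the $i$-th column of $M$. The key observation is purely associative: $(x\sigma)M_i = x(\sigma M_i) = x(\sigma M)_i$, since the $i$-th column of the product $\sigma M$ is exactly $\sigma$ applied to the $i$-th column of $M$. Substituting this into each factor gives $\prod_{i=1}^n \overline{x(\sigma M)_i}^{\,k}/|x(\sigma M)_i|^{2s}$, which is precisely the definition of $\Omega_s^k(x,\sigma M)$ provided $\sigma M$ is still a valid argument, i.e.\ $\sigma M\in R_n$.

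**The one genuine point to check.** The only step that is not formal associativity is verifying that $\sigma M\in R_n$, so that the right-hand side is even well-defined. Here I would invoke the structure of $R_n$ from (\ref{M}): if $M$ has columns $M_i=(\rho_i(m_1),\dots,\rho_i(m_n))^T$ that are conjugate over $F$ for some $m_1,\dots,m_n\in K$, then since $\sigma\in GL_n(\O_F)$ has entries in $F$ and each embedding $\rho_i$ fixes $F$, the product $\sigma M$ has columns $\sigma M_i=(\rho_i(\sum_j \sigma_{1j}m_j),\dots,\rho_i(\sum_j \sigma_{nj}m_j))^T$. Thus $\sigma M$ is again of the form (\ref{M}) with the same embeddings $\rho_i$ and new field elements $m'_p=\sum_j \sigma_{pj}m_j\in K$; since $\sigma$ is invertible the resulting matrix remains in $GL_n(\C)$, so $\sigma M\in R_n$. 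This uses in an essential way that $\sigma$ has coefficients in $F$ and that the $\rho_i$ fix $F$, which is what lets $\sigma$ pass through each embedding.

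**Expected obstacle.** There is no real obstacle: once the $R_n$-membership is settled, the identity is an immediate consequence of the associativity of matrix multiplication applied factor-by-factor, with the exponents $k$ and $s$ untouched because the same scalar $x(\sigma M)_i$ appears in both the numerator and denominator of each factor. The mild care needed is simply to confirm that taking columns commutes with left-multiplication, $(\sigma M)_i=\sigma M_i$, which is standard. I would present the computation in a single short display chaining the two definitions through the associativity step.
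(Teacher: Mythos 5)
Your proof is correct and follows the same route as the paper's: the identity reduces to the associativity $(x\sigma)M_i = x(\sigma M)_i$ applied factor-by-factor, together with the observation that $\sigma M$ remains in $R_n$. In fact you spell out the $R_n$-membership (passing $\sigma$ through the embeddings $\rho_i$ since its entries lie in $\mathcal{O}_F\subset F$ and the $\rho_i$ fix $F$) in more detail than the paper, which simply asserts it.
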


\begin{proof}
Observe that since the entries of $\sigma$ are in $\mathcal{O}_K$, then $\sigma M$ is 
also in $R_n$. Now,
from the associativity of the product of matrices we have $x(\sigma M)=(x\sigma )M$. Therefore by comparing the $i$-th entry on each side we get 
$\langle x\sigma,M_i \rangle=\langle x,(\sigma M)_i \rangle$. The lemma follows.
\end{proof}

\begin{remark}
Using the partition $X(d)$ in (\ref{X(d)}), we may rearrange the sum (\ref{Szcech-cocycle-sum}) as in \cite[p.598]{Scz} to obtain the expression
\be
\Psi_s(\mathfrak{A})=\sum_{d\in D}\det(\sigma)\sum_r P_r(\sigma)G_r(\sigma),
\ee
where
\be
G_r(\sigma)
   =\sum_{x\in X(d)\cap {\bf \Lambda}} 
         \Omega^k_s(x,M)
             \prod_{j=1}^n 
                  \frac{r_j!}{\langle x,\sigma_j\rangle^{1+r_j}}.
\ee
\end{remark}

The following result of Colmez, from Colloraire 1 of \cite[p.196]{Col}, plays a key role in the proof of absolute convergence.  We will consider $\C^n$ endowed with the sup norm, i.e,
\be
||w||=\max_{i=1,\dots,n}\big\{|w_i|\big \}
\ee
for any $w=(w_1,\dots,w_n)$ in $\C^n$.

\begin{lemma}[Colmez]
\label{colmez2}
Let ${\bf \Lambda}$ be a lattice in $\C^n$. Assume that for all $\epsilon>0$ there exists a constant $C(\epsilon)>0$ such that if $w\in{\bf\Lambda}$ and $\prod_{i=1}^n|w_i|\leq C(\epsilon)||w||^{-\epsilon}$ then $w=0$. Then the series
\be
\sum_{\substack{w\in\bf\Lambda\\ w\neq0}}\prod_{i=1}^n|w_i|^{-2\gamma}||w||^{-\mu}
\ee
converges if $\mu>0$ and $\gamma>1$. 
\end{lemma}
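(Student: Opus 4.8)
The plan is to prove Colmez's convergence lemma by a geometric counting argument, controlling how many lattice points lie in dyadic shells and, within each shell, how the product $\prod_i |w_i|^{-2\gamma}$ can be large only when $w$ is close to the coordinate hyperplanes. First I would dyadically decompose the lattice according to the sup norm: set
\[
S_N=\{w\in{\bf\Lambda}:\; 2^N\le \|w\|<2^{N+1}\},\qquad N\ge N_0,
\]
so that on $S_N$ we have $\|w\|^{-\mu}\asymp 2^{-N\mu}$, and it suffices to show $\sum_N 2^{-N\mu}\bigl(\sum_{w\in S_N}\prod_i|w_i|^{-2\gamma}\bigr)$ converges. Since $\mu>0$ gives a summable geometric factor $2^{-N\mu}$, the whole problem reduces to bounding the inner sum $T_N:=\sum_{w\in S_N}\prod_{i=1}^n|w_i|^{-2\gamma}$ by something that grows at most polynomially (indeed like a fixed power of $N$ times $2^{cN}$ with $c<\mu$) in $N$.

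The heart of the estimate is the hypothesis: if $\prod_i|w_i|\le C(\epsilon)\|w\|^{-\epsilon}$ forces $w=0$, then for every nonzero $w\in S_N$ we have the lower bound $\prod_i|w_i|> C(\epsilon)\|w\|^{-\epsilon}\ge C(\epsilon)2^{-(N+1)\epsilon}$. This prevents the product of coordinates from being too small, i.e. it keeps lattice points away from the union of coordinate hyperplanes, which is exactly what could make $\prod_i|w_i|^{-2\gamma}$ blow up. To exploit this I would further subdivide each shell $S_N$ according to the dyadic sizes of the individual coordinates: for a tuple $\mathbf{a}=(a_1,\dots,a_n)$ of integers with $a_i\le N+1$, let
\[
S_{N,\mathbf a}=\{w\in S_N:\; 2^{a_i}\le |w_i|<2^{a_i+1}\ \text{for each }i\}.
\]
On $S_{N,\mathbf a}$ the product $\prod_i|w_i|^{-2\gamma}\asymp 2^{-2\gamma\sum_i a_i}$, while the non-degeneracy lower bound forces $\sum_i a_i\ge -(N+1)\epsilon - O(1)$, i.e. $\sum_i a_i$ cannot be very negative. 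Simultaneously, the number of lattice points in $S_{N,\mathbf a}$ is controlled by volume: since the box has side lengths $\asymp 2^{a_i}$ in each complex coordinate, standard lattice-point counting gives $\#S_{N,\mathbf a}\ll \prod_i \max(2^{2a_i},1)+ (\text{lower order boundary terms})$, where the factor $2^{2a_i}$ reflects that each $w_i\in\C$ contributes two real dimensions.

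Combining these, the contribution of each coordinate $i$ to $T_N$ is a sum over $a_i$ ranging from roughly $-N\epsilon$ up to $N+1$ of a term behaving like $2^{(2-2\gamma)a_i}$; because $\gamma>1$ we have $2-2\gamma<0$, so the sum over large $a_i$ converges and is dominated by its smallest admissible value. The lower cutoff $a_i\gtrsim -N\epsilon$ (enforced jointly by the non-degeneracy hypothesis) then produces at worst a factor like $2^{2\gamma N\epsilon}$, and choosing $\epsilon$ small enough that $2\gamma\epsilon<\mu$ makes this harmless against the surviving geometric factor $2^{-N\mu}$. I expect the main obstacle to be making the joint counting rigorous: one must combine the single global constraint $\prod_i|w_i|\ge C(\epsilon)\|w\|^{-\epsilon}$ (a \emph{product} lower bound, not a per-coordinate one) with the per-coordinate volume counts, and carefully track the boundary correction terms in the lattice-point estimate so that summing over all $\mathbf a$ and then over $N$ really converges. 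A clean way to handle the interplay is to fix the coordinate $i_0$ achieving $|w_{i_0}|=\|w\|$, split off its contribution, and use the product bound to control the remaining $n-1$ coordinates, so that the exponent bookkeeping reduces to showing $2\gamma\epsilon<\mu$ for suitably small $\epsilon$, which is possible precisely because $\gamma>1$ and $\mu>0$.
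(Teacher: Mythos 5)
Your skeleton (dyadic shells in $\|w\|$, refinement by dyadic coordinate sizes $\mathbf{a}$, exponent bookkeeping) is reasonable, but there is a genuine gap exactly at the step you flag as the ``main obstacle,'' and it is not repairable with the tools you allow yourself. You use the hypothesis only as a pointwise lower bound on $\prod_i|w_i|$ (to bound the summand), and you count $S_{N,\mathbf{a}}$ by discreteness alone, via $\#S_{N,\mathbf{a}}\ll\prod_i\max(2^{2a_i},1)$. Two things then go wrong in the bookkeeping: the per-coordinate factor is $\max(2^{2a_i},1)\,2^{-2\gamma a_i}$, which equals $2^{(2-2\gamma)a_i}$ only for $a_i\ge0$ and is the \emph{growing} quantity $2^{2\gamma|a_i|}$ for $a_i<0$; and the hypothesis constrains only the sum $\sum_i a_i\gtrsim-N\epsilon$, not each $a_i$ separately (a single $a_i$ can be as negative as about $-(n-1)N$ when the other coordinates are large). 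Carrying your estimates out correctly gives only $T_N\ll N^n2^{2(n-1)N+2\gamma\epsilon N}$, i.e.\ convergence for $\mu>2(n-1)+2\gamma\epsilon$, not for all $\mu>0$ (the paper's application needs $\mu=1$). The gap is real, not just slack in the inequalities: for $n=2$, a configuration with $\asymp 2^{2N}$ points per shell having $|w_1|\asymp2^{-N}$ and $|w_2|\asymp2^N$ is consistent with your counting bound, and it has $\prod_i|w_i|\asymp1$, hence satisfies the hypothesis with $C(\epsilon)$ uniform in $\epsilon$; yet it contributes $\asymp2^{(2-\mu)N}$ per shell, divergent for $\mu\le2$. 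So no argument using only these two ingredients can prove the lemma; the hypothesis must enter the \emph{counting}, not just the summand.

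The missing idea is to apply the hypothesis to \emph{differences} of lattice points. If $w\neq w'$ lie in a common polydisc of radii $\delta_1,\dots,\delta_n$ with $2^n\prod_i\delta_i\le C(\epsilon)\,(2\max_i\delta_i)^{-\epsilon}$, then $v=w-w'$ is a nonzero lattice point with $\prod_i|v_i|\le C(\epsilon)\|v\|^{-\epsilon}$, a contradiction; hence each such polydisc contains at most one lattice point. Covering $S_{N,\mathbf{a}}$ by polydiscs with $\delta_i=t\,2^{a_i}$ and $t^n\asymp C(\epsilon)\,2^{-N\epsilon-\sum_ia_i}$ gives $\#S_{N,\mathbf{a}}\ll_\epsilon2^{2N\epsilon}\,2^{2\sum_ia_i}$, with no $\max(\cdot,1)$: points near the coordinate hyperplanes are genuinely rare, which is exactly what box counting cannot see (and what rules out the fake configuration above). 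With this count, count times summand is $2^{2N\epsilon}\,2^{(2-2\gamma)\sum_ia_i}$, a function of $S=\sum_ia_i$ alone; summing the geometric series over $S\ge-N\epsilon-O(1)$ (this is where $\gamma>1$ is used) and over the $O(N^{n-1})$ tuples with a given $S$ yields $T_N\ll_\epsilon N^{n-1}2^{2\gamma\epsilon N}$, and choosing $\epsilon<\mu/(2\gamma)$ completes the proof---precisely the bookkeeping you anticipated, but it is only legitimate after the difference trick. For reference, the paper gives no proof of this lemma at all; it cites Colmez \cite[Corollaire 1, p.196]{Col}, so the burden of the counting argument falls entirely on your proposal.
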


\begin{remark}\label{Colmez-Remark}
Observe that the above lemma  also holds for any translate $\bf \Lambda+u$, $\mathbf{u} \in \mathbb{C}^n-\bf{\Lambda}$, of the lattice $\bf \Lambda$. Indeed, this follows from the fact that there exists a positive constant $C$ such that $||w||\leq C||w+\bf u||$ for any $w\neq  0, \bf u$ in $ \bf \Lambda$.
\end{remark}

\begin{theorem}\label{convergence-theorem}
The cocycle $\Psi_s$ converges absolutely for Re$(s)> 1+\frac{k}{2}.$ In particular, this convergence is independent of $P, {\bf u},$ and $M$.
\end{theorem}

\begin{proof}
From the decomposition of $\mathbb{C}^n$ into the sets $X(d)$, we can partition the lattice as disjoint union
\be
{\bf \Lambda}+{\bf u}=\dot{\bigcup_{d\in D}} {\bf \Lambda}_d,
\ee
where 
${\bf \Lambda}_d=({\bf \Lambda}+{\bf u})\cap X(d)$. Thus 
\be
\Psi_s(\a)(P,{\bf u},M)
=\sum_{d\in D}\sum_{x\in {\bf \Lambda}_d}\psi(\a)(P,x)\,\Omega^k_s(x,M).
\ee
Since $D$ is finite, it suffices to show that for a fixed $d\in D$ the sum
\be
\sum_{x\in  {\bf \Lambda}_d}\psi(\a)(P,x)\Omega^k_s(x,M)
\ee
is convergent for Re$(s)> 1+\frac{k}{2}.$ In particular, the range of convergence is uniform in $\a,P, {\bf u},$ and $M$.

Observe that for $x\in {\bf \Lambda}_d$, the term $\psi(\a)(P,x)$ can
be expressed as $f(A_{1d_1},\dots, A_{nd_{n}})(P,x)$.
Let us denote the matrix $(A_{1d_1},\dots, A_{nd_{n}})$ by $\sigma=(\sigma_1,\dots, \sigma_n)$.  From (\ref{Pr}) we have that
\be
\sum_{x\in {\bf \Lambda}_d}\psi(\a)(P,x)\Omega^k_s(x,M)
    =\sum_r \det(\sigma)P_r(\sigma)
       G_{r}(\sigma),
\ee
where
\be
G_r(\sigma)=\sum_{x\in {\bf \Lambda}_d}
    \Omega_s^k(x,M)
    \prod_{j=1}^n 
                  \frac{r_j!}{\langle x,\sigma_j\rangle^{1+r_j}}.
\ee
We may assume that $\det(\sigma)\neq 0$, 
otherwise the above sum is zero. Also, by a change  of variables 
$w=xM$, and letting $M'=M^{-1}\sigma$ and ${\bf \Lambda}_d'={\bf \Lambda}_dM$, we obtain the simplified expression
\be
G_r(\sigma)=\sum_{w\in {\bf \Lambda}_d' }\prod_{j=1}^n\frac{r_j!}{(wM'_j)^{1+r_j}}\frac{\overline{w_j}^k}{|w_j|^{2s}}.
\ee

 Then it suffices to bound $G_r(\sigma)$. We have trivially by the triangle inequality
\be
|G_r(\sigma)|\leq\sum_{w\in {\bf \Lambda}_d'}\prod_{j=1}^n\frac{r_j!}{|wM'_j|^{1+r_j}}\frac{1}{|w_j|^{2\mathrm{Re}(s)-k}}.
\ee
By Lemma \ref{Lattice-bound} below, the constant  
\be
C=\max_{\substack{w\in {\bf \Lambda}_d'\\ i=1,\dots, n}} 
    \left\lbrace  |wM'_{i}|
        \prod_{j=1}^n  \frac{1}{ |wM'_j|^{1+r_j}}  \right\rbrace  
        =\max_{\substack{w\in {\bf \Lambda}_d'}} 
    \left\lbrace  ||wM'||
        \prod_{j=1}^n  \frac{1}{ |wM'_j|^{1+r_j}}  \right\rbrace  
\ee
is finite. Then
\begin{align}\label{G-r}
|G_r(\sigma)|&\leq C \sum_{w\in {\bf \Lambda}_d'}\prod^{n}_{j=1}\frac{r_j!}{|w_j|^{2\text{Re}(s)-k}}\frac{1}{||wM'||}\notag\\
&\leq ||M'||\,C\sum_{w\in {\bf \Lambda}_d'}\prod^{n}_{j=1}\frac{r_j!}{|w_j|^{2\gamma}}\frac{1}{||w||}\\
&\leq ||M'||\,C\sum_{w\in ({\bf \Lambda+u})M}\prod^{n}_{j=1}\frac{r_j!}{|w_j|^{2\gamma}}\frac{1}{||w||},
\end{align}
where $\gamma=\text{Re}(s)-\frac{k}{2}$, and $||M'||$ is a constant such that $||w||\leq ||M'||\cdot ||wM'||$ for all $w$; this constant $||M'||$ exists since the matrix $M'$ is invertible, so as an operator its inverse is bounded. Then by Lemma \ref{colmez2} and Remark \ref{Colmez-Remark}, applied to the smallest lattice containing  $({\bf \Lambda+u})M$, we see that 
the last term in the chain of inequalities above  converges absolutely for $\gamma>1$, which shows that  $G_r(\sigma)$
is bounded.
\end{proof}

\begin{remark}
     The hypothesis of Lemma \ref{colmez2} is satisfied 
        for ${\bf \Lambda}_d$ for the following reason. 
        For a fixed  $1\leq i\leq n$, let $t_i=1+\epsilon$ 
        and $t_j=1$ for $1\leq j\leq n$, $ j\neq i$. 
        Then, by Lemma \ref{Lattice-bound}, there exists 
        a constant $C_i(\epsilon)>0$ such that 
        \be C_i(\epsilon)<\prod_{j=1}^n|w_j|^{t_j} \ee
        for every $w\in {\bf \Lambda}_d$. 
        Let $C(\epsilon)=\min_{1\leq i\leq n}\{C_i(\epsilon)\}$, 
        then \be C(\epsilon)<||w||^{\epsilon}\prod_{j=1}^n|w_j|,\ee 
        for every $w\in {\bf \Lambda}_d$.
        \end{remark}

To conclude the proof, we obtain the constant $C$ as required above.

\begin{lemma}
\label{Lattice-bound}
Let ${\bf \Lambda}$ be a lattice in $\C^n$ and  real numbers $t_i\geq 0$, $i=1,\dots,n$.
Let ${\bf \Lambda'}:=\{w\in {\bf \Lambda}\ :\ \prod_{i=1}^nw_i\neq 0\}$. Then the supremum
\be
\sup_{w\in {\bf \Lambda'}}\left( \prod_{i=1}^n|w_i|^{-t_i}\right)
\ee
is finite.
\end{lemma}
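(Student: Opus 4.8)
The plan is to reduce the uniform bound on the product to a coordinate-by-coordinate estimate. Since every exponent satisfies $t_i \geq 0$, the factor $|w_i|^{-t_i}$ is large exactly when the $i$-th coordinate $|w_i|$ is small, so it is enough to bound each coordinate of a point of ${\bf \Lambda}'$ away from $0$ and then multiply the resulting constants. First I would note that for a fixed $w \in {\bf \Lambda}'$ all coordinates $w_i$ are nonzero by definition, so $\prod_{i=1}^n |w_i|^{-t_i}$ is certainly finite; the real content of the lemma is that this quantity is bounded \emph{uniformly} over the infinitely many $w \in {\bf \Lambda}'$.

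The key step is to produce, for each $i$, a constant $c_i > 0$ such that $|w_i| \geq c_i$ for every $w \in {\bf \Lambda}$ with $w_i \neq 0$. I would obtain this from the discreteness of the $i$-th coordinate projection of the lattice: writing ${\bf \Lambda} = \Lambda_1 \times \cdots \times \Lambda_n$ as in Definition \ref{Eisenstein-cocycle}, the projection onto the $i$-th factor is precisely the lattice $\Lambda_i \subset \C$, which is discrete and therefore has a shortest nonzero vector; I take $c_i$ to be its length, which is strictly positive. The same reasoning applies when ${\bf \Lambda}'$ is replaced by a coset ${\bf \Lambda} + {\bf u}$ or by the image of ${\bf \Lambda}$ under an $\mathcal{O}_F$-linear map $\sigma$ with entries in $\mathcal{O}_F$, as happens in the applications, since in each case the set of values taken by a given coordinate is again a discrete subset of $\C$ that does not accumulate at $0$ (here one uses that the coordinate modules arising from a single extension $K/F$ are commensurable, so that their sum $\sum_l \sigma_{li}\Lambda_l$ stays discrete).

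Granting the $c_i$, the proof closes at once: for any $w \in {\bf \Lambda}'$ one has $|w_i| \geq c_i$, and since $t_i \geq 0$ this gives $|w_i|^{-t_i} \leq c_i^{-t_i}$; multiplying over $i$ yields $\prod_{i=1}^n |w_i|^{-t_i} \leq \prod_{i=1}^n c_i^{-t_i}$, a finite constant independent of $w$, whence the supremum is at most $\prod_{i=1}^n c_i^{-t_i} < \infty$. The only genuine obstacle I anticipate is the existence of the lower bounds $c_i$: for the product lattice it is immediate, but it would fail for an arbitrary lattice in $\C^n$ whose coordinate projections are dense, so the argument rests essentially on the product structure (equivalently, on the number-field origin of the lattices, which guarantees the requisite discreteness).
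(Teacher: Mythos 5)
Your proof is correct and takes essentially the same route as the paper: both arguments reduce the claim to a uniform positive lower bound on each nonzero coordinate of a point of ${\bf \Lambda}'$ (the paper simply asserts a constant $C>0$ with $C<|w_i|$ for all $w\in{\bf \Lambda}'$ and concludes $C^{t_1+\cdots+t_n}<\prod_{i=1}^n|w_i|^{t_i}$), and then multiplies the coordinate bounds. Your additional observation that this lower bound genuinely requires the product structure ${\bf \Lambda}=\Lambda_1\times\cdots\times\Lambda_n$ (or some arithmetic discreteness of the coordinate projections), and would fail for an arbitrary lattice in $\C^n$ whose projections accumulate at $0$, is a hypothesis the paper's one-line proof uses silently, so your version is if anything the more careful one.
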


\begin{proof}
Since ${\bf \Lambda}$ is a lattice there exists a constant $C>0$ such that $C<|w_i|$ for all 
$w\in {\bf \Lambda'}$. Then 
\be
C^{t_1+\cdots+t_n}< \prod_{i=1}^n|w_i|^{t_i}  
\ee
for all $w\in {\bf \Lambda'}$ and the result follows.
\end{proof}

\section{Values of $L$-functions}

\label{sec4}

In this section, we introduce the Hecke $L$-functions whose special values we will parametrize using the Eisenstein cocycle. In particular, we will construct a cycle $\E[\b,M]$ on which the Eisenstein  cocycle will be evaluated. The value of the cocycle on $\E[\b,M]$ will then be related to the $L$-function. As the proof of this relation is rather technical, for the sake of exposition we delay it to the Section \ref{proof-lemma}. Indeed, assuming this relation it is straightforward to parametrize the $L$-function, which we do in this section.

\subsection{An expression for the Hecke $L$-function} For the rest of this paper, we fix a degree $n$ extension $F$ of an imaginary quadratic extension $K$.

\begin{definition}\label{The partial-L}(The partial $L$-function.)
Let $\mathfrak{b}$ be an integral ideal of $F$ prime to $\f$.  For  an element $r$ in $\mathfrak{b}^{-1}$, and $\lambda$ as in (\ref{Lambda}), we define the partial $L$-function as
\begin{align}
\label{Partial-L}
\mathcal L(\mathfrak{b},r,s) &=\sum_{\substack{(a)\in I(\f)\\ a\in \f \mathfrak{b}^{-1}+r}}\lambda(a)N_{F/\Q}((a))^{-s}\\
\label{Partial-La}
&=\sum_{a\in \f \mathfrak{b}^{-1}+r/U_\f}\lambda(a)N_{F/\Q}((a))^{-s}
\end{align}
The second equality follows from the observation that for two principal ideals $(a)$ and $(b)$, where $a,b\in \f \mathfrak{b}^{-1}+r,$ the identity $(a)=(b)$ holds if and only if $a=bw$ for some $w$ in $U_\f$.
\end{definition}
The following proposition expresses $L(s,\chi)$ from (\ref{Main-L-function}) in terms of $\mathcal L(\b,r;s)$. 
\begin{proposition}
\label{fullpartial1}
For an integral ideal $\mathfrak{b}$ of $F$ prime to $\f$, the partial  Hecke $L$-function $\mathcal{L}_{\f}(\chi,\mathfrak{b};s)$ from Equation \eqref{Partial-Main-L-function} can be written in terms of the partial $L$-function \eqref{Partial-La} as follows
\begin{equation}
\label{fullypartial}
\mathcal{L}_{\f}(\chi,\mathfrak{b};s)=\frac{\chi(\b)}{N_{F/\Q}(\b)^s}\,
       \mathcal L(\b,1,s).
\end{equation}
In particular, from \eqref{Main-Partial} we have that
\begin{equation}
\label{fullpartial}
L(s,\chi)=\sum_{\b}\frac{\chi(\b)}{N_{F/\Q}(\b)^s}\,
       \mathcal L(\b,1,s),
\end{equation}
where $\b$ is running over a set of integral representatives of $G_{\f}$, the ray class group  of $F$ mod $\f$.
\end{proposition}

\begin{proof}
Fix an integral ideal $\mathfrak{b}$ of $F$ prime to $\f$. We can write
\be\label{inner}
\mathcal{L}_{\f}(\chi,\mathfrak{b};s)=\frac{\chi(\b)}{N_{F/\Q}(\b)^s} \sum_{\mathfrak{a}\sim_{\f} \b}\frac{\chi(\mathfrak{ab}^{-1})}{N_{F/\Q}(\mathfrak{ab}^{-1})^s},
\ee
where the sum is running over all integral ideals $\mathfrak{a}$  equivalent to $\mathfrak{b}$ in the ray class group of $F$ mod $\f$, i.e., $\mathfrak{ab}^{-1}=(a)$ for some $a$ in $\b^{-1}$ such that $a \equiv 1 \text{mod}\ \f $. 

On the other hand, notice that for $r_1,r_2\in \b^{-1}$, $(r_1)=(r_2)$ if and only if $r_1\equiv r_2\pmod{\f\b^{-1}}$. Thus we can write the inner sum of (\ref{inner}) as
\begin{align}
\sum_{\mathfrak{a}\sim_{\f} \b}
     \frac{\chi(\mathfrak{ab}^{-1})}{N_{F/\Q}(\mathfrak{ab}^{-1})^s}
          &=\sum_{\substack{(a)\in I(\f)\\ a\in \b^{-1},\ a\equiv 1\ \text{mod}\, \f}}
               \frac{\lambda(a)}{N_{F/\Q}((a))^s}\notag\\
                    &=
                      \sum_{\substack{(a)\in I(\f)\\ a\in \f \mathfrak{b}^{-1}+1}}          
                           \lambda(a)N_{F/\Q}((a))^{-s}\notag\\
                  &=\mathcal L(\b,1,s)\notag,
\end{align}
and the identity (\ref{fullypartial}) follows.
\end{proof}

\subsection{The cycle $\E[\b,M]$ and the parametrization of the partial $L$-function}\label{the cycle-section}

Let us first introduce the notation we will use  for the rest of the paper. We fix an ideal $\mathfrak{b}$ of $F$ prime to $\f$  and also an element $r$ of $\b^{-1}$. By \cite[Theorem 81.3]{Ome}, for the ideal $\mathfrak{fb}^{-1}$ there exists a basis $m_1,\dots, m_n$ of the extension $F/K$, as well as integral  ideals $\Lambda_1,\dots, \Lambda_n$ of $\mathcal{O}_K$, such that
\begin{equation}\label{fbinverse}
\mathfrak{fb}^{-1}=\Lambda_1m_1+\cdots+\Lambda_nm_n.
\end{equation}
Moreover, by
\cite[81.5]{Ome} we can further assume that $\Lambda_i=\mathcal{O}_K$, for $2\leq i \leq n$. However,  this stronger assumption is not needed for our purposes.

For this given basis, there exists elements $u_1,\dots, u_n\in K$ such that $r=u_1m_1+\cdots+u_nm_n$. Thus
\begin{equation}\label{Fb+r}
\f \mathfrak{b}^{-1}+r=(\Lambda_1+u_1)\,m_1+\cdots+(\Lambda_n+u_n)\,m_n.
\end{equation}
Let $M_i$ denote the column vector $(\rho_i(m_1),\dots, \rho_i(m_n))^T$, $1\leq i\leq n$,
where $\rho_1,\dots,\rho_{n}$ denote the embeddings of $F$ into $\C$, fixing
the imaginary quadratic field $K$. Let
\begin{equation}\label{M-u-Lambda}
M=\big(\,M_1,\dots,M_n\,\big),\quad
{\bf u}=(u_1\dots, u_n),\quad \text{and}\quad
{\bf \Lambda}=\Lambda_1\times\dots\times\Lambda_n.
\end{equation}

Associated to the basis $m_1,\dots, m_n$ there is also
a  representation 
$\varrho: \mathcal{O}_F^{\times}\to GL_n(K)$ defined as 
\begin{equation}\label{varrho}
\varrho(\eta)=M\delta(\eta)M^{-1},
\end{equation}
 where 
$\delta(\eta)$ is the diagonal matrix
\begin{equation}\label{delta-map}
\delta(\eta)=\begin{pmatrix}
\rho_1(\eta)	&		& \\ 
				& \ddots	& \\
				&		& \rho_n(\eta)
\end{pmatrix}.
\end{equation}
The transpose of the matrix (\ref{varrho}) corresponds to the matrix representation of the 
linear transformation from $F$ to $F$ given by multiplication-by-$\eta$ in $F$ with respect to
the basis $m_1,\dots, m_n$ of the extension $F/K$. Thus, it indeed lies 
in $GL_n(K)$. To be more specific, from the fact that 
$\eta\, \f\b^{-1}=\f\b^{-1}$ for any unit $\eta\in \mathcal{O}_F^{\times}$, it follows
after (\ref{fbinverse}) that $\varrho(\eta)$ has entries in
\be
\begin{pmatrix}
\Lambda_1\Lambda_1^{-1} &\dots &\Lambda_n\Lambda_1^{-1}\\
\vdots  & \ddots & \vdots\\
\Lambda_1\Lambda_n^{-1} &\dots &\Lambda_n\Lambda_n^{-1}
\end{pmatrix}.
\ee

Additionally, there are  two norm forms associated to $m_1,\dots,m_n$, namely
\begin{equation}\label{P and Q}
Q(x)=\prod_{i=1}^n x\,M_i\quad \text{and}\quad
P(x)=\prod_{i=1}^n x\,M_i^{-T}, \quad x\in \C^n.
\end{equation}
Here $M_i$ and $M_i^{-T}$ denote the $i$th column of 
the matrices $M$ and $M^{-T}$, respectively, defined  in (\ref{M-u-Lambda}). 
Notice that, for $x=(x_1,\dots, x_n)\in F^n$, we have
\begin{equation}\label{Q-norm}
Q(x)=N_{F/K}(\xi),\quad \xi=\sum_{i=1}^n\, x_i\,m_i.
\end{equation}
Furthermore, a simple calculation
gives the following relationships
\begin{equation}\label{Action-P-Q}
\big(\varrho(\alpha)Q\big)(x)=N_{F/K}(\alpha)Q(x)\quad \text{and} \quad \big(\varrho(\alpha)^{T}P\big)(x)=N_{F/K}(\alpha)P(x),
\end{equation}
for any $\alpha$ in $\mathcal{O}_F^{\times}$ and any $x$ in $\C^n$, where the action of a matrix on a polynomial is defined as in  Equation (\ref{Action-Matrix-poly}).

Finally, we let $V_\f$ be the free part of the group $U_{\f}^{(1)}=\{\epsilon\in U_{\f}\,:\,N_{F/K}(\epsilon)=1\}$, i.e., the subgroup generated by the elements of infinite order in $U_{\f}^{(1)}$. Each one of the groups
\be
V_\f\subset U_{\f}^{(1)}\subset U_\f \subset \mathcal O_F^\times .
\ee
is  of rank $n-1$ since they have finite index in $\mathcal O_F^\times$  and
$
\mathrm{rank}_\Z(\mathcal O_F^\times)=n-1,
$
by the Dirichlet unit theorem.


Thus we will choose a fix set of generators  $\epsilon_1,\dots,\epsilon_{n-1}$ of $V_{\f}$, i.e.,
\begin{equation}\label{norm1}
V_{\f}=\left\langle \epsilon_1\dots, \epsilon_{n-1} \right\rangle
\quad  \mathrm{and} \quad N_{F/K}(\epsilon_i)=1,\  i=1,\dots, n-1.
\end{equation}
With these observations, the sum (\ref{Partial-L}) 
can now be expressed as 
\begin{equation}\label{Partial-L-V}
\mathcal L(\mathfrak{b},r,s)=[U_\f:V_\f]^{-1}\sum_{a\in \f \b^{-1}+r/V_\f}\lambda(a)N_{F/\Q}((a))^{-s}.
\end{equation}

Let  $\mathfrak{V}_{\f}=\varrho(V_\f)$. From (\ref{norm1}), this is a subgroup of $\text{SL}_n(K)$ generated by the matrices
\begin{equation}\label{A_i}
A_i=\varrho(\epsilon_i).\quad i=1,\dots,n-1. 
\end{equation}

\begin{remark}
Note that in most cases  $V_\f$ coincides with
the group  $U_{\f}^{(1)}=\{\epsilon\in U_{\f}\,:\,N_{F/K}(\epsilon)=1\}$.
This is true, for example, when $N_{F/\Q}(\f)$ is large enough. 
Indeed, this last condition forces
$U_{\f}^{(1)}$ to have no roots of unity, thus being free. To see this, just take $\f$
such that $N_{F/\Q}(\f)$ is larger than any of the elements in the finite set
\be
\left\lbrace \big|N_{F/\Q}(\zeta-1)\big|\,:\,\zeta\in (\mathcal{O}_F^{\times})_{\text{tor}} \right\rbrace, 
\ee
where $(\mathcal{O}_F^{\times})_{\text{tor}}$ is the group of roots of unity in $F$, which  is finite.
\end{remark}

\subsubsection{The cycle {$\E[\mathfrak{b},M]$}}
In this section we will construct the cycle $\E=\E[\mathfrak{b},M]$, which depends on $\mathfrak{b}$ and  $M$, with which the Eisentein cocycle will be paired, giving the parametrization of $L$-values. 
\begin{definition}
\label{cycle}
(The cycle)
 Define the chain in $C_{n-1}(\mathfrak{V}_{\f},\Z)$,
\begin{equation}\label{the cycle}
\E=\E[\mathfrak{b},M]=\rho\sum_\pi \text{sign}(\pi)[A_{\pi(1)}|\dots|A_{\pi(n-1)}]
\end{equation}
where 
\be
[A_1|\dots|A_n]=(1,A_1,A_1A_2,\dots,A_1A_2\dots A_n)
\ee
and 
\be
\rho=(-1)^{n-1}\text{sign}(R_{F/K}).
\ee
Here $R_{F/K}=R(\epsilon_1,\dots, \epsilon_{n-1})$ is the regulator $\det(2\log|\rho_i(\epsilon_j)|)$, $1\leq i,j\leq n-1$, and $\pi$ runs over all permutations of the set $\{1,\dots,n-1\}$. The factor $\rho$ is introduced so that $\E[\b, M]$ is independent of the order of
the units $\epsilon_1,\dots, \epsilon_{n-1}$ and the simplices involved  are positively oriented (cf. \cite[p.596]{Scz}).

From Lemma 5 of \cite[p.592]{Scz}, one sees that this chain defines a  cycle in $H_{n-1}({\mathfrak{V}_{\f}},\Z)$,  denoted by $[\E]$, whose  homology class  is independent of the set of generators of $V_\f$. In general, if $V$ is a subgroup of finite index in $V_\f$ with generators
$\epsilon_1',\dots, \epsilon_{n-1}'$ we define
\begin{equation}\label{cycle-V}
\E[\mathfrak{b},M,V]=\rho\sum_\pi \text{sign}(\pi)[A'_{\pi(1)}|\dots|A'_{\pi(n-1)}]
\end{equation}
where $A'_i=\varrho(\epsilon_i')$, $i=1,\dots, n-1$, and $\rho$ in this case is $(-1)^{n-1}\text{sign}(R(\epsilon_1',\dots, \epsilon_{n-1}'))$. This also defines  
a homology  class in $H_{n-1}(\mathfrak{V},\Z)$ independent of the set of generators of $V$; here  $\mathfrak{V}=\varrho(V)$.

\end{definition}

The identity in the next lemma is the key to parametrizing values of the $L$-functions using the Eisenstein cocycle. 
\begin{lemma}\label{lemma6}
\label{det}
 With notation as above we have
\begin{equation}
\label{deteq}
\sum_{A \in {\mathfrak{V}_{\f}}}\psi\big(\,A\, \E[\b,M]\, \big)(P^{l-1},x)=\det(M)\frac{((l-1)!)^n}{Q(x)^l},
\end{equation}
for any nonzero $x\in{\bf \Lambda}+\bf u$ and  any positive integer $l$.
\end{lemma}

\begin{proof}
See  Section \ref{proof-lemma}.
\end{proof}

\begin{remark}
(1) In Lemma \ref{independence}, we shall show that the left-hand side of this identity is independent of the choice of a  subgroup $V$ of finite index in $V_\f$ and any choice of generators for $V$.

(2) The proof of the identity follows the ideas of \cite[Lemma 6]{Scz}, and is related to the proof of the Dirichlet unit theorem. Indeed, the main ingredient of the proof is a certain geometric construction that realizes a cancellation in the sum using the cocycle property of $\psi$. But in our case one no longer has a system of totally positive units, which is crucial to Sczech's proof, as they provide certain estimates on the translates  of the fundamental domain. The estimate is complicated by the possibility that the translates may have nontrivial intersection. (See \S\ref{strategy} for a detailed summary.) In our case, we are able to modify the simplices in question so as to avoid the approximation argument needed for the estimates. 

(3) Sczech also points out in his case \cite[pp.586, 597]{Scz2} that an alternative proof of the Lemma can be given using an identity of Hurwitz from which the left hand side of the equation \eqref{deteq} can be seen as integrating along the translates of a fundamental domain of the group of units.
\end{remark}

\subsubsection{Parametrization of  $\mathcal L(\b,r,s)$}

We are now ready to parametrize the values of the partial $L$-function, which will lead us to the main result.

\begin{theorem}
\label{maintheorem}
Let $\f ,\b$ be relatively prime integral ideals of $F$ and $r \in \b^{-1}$. 
Let $M$, ${\bf u}$ and ${\bf \Lambda}$ as in \eqref{M-u-Lambda} and $P$
as in \eqref{P and Q}. Then
\be
\Psi_s\big(\,\E[\b,M]\,\big)(P^{l-1},{\bf u},M)
    =-\det(M)\,((l-1)!)^n\,[U_\f:V_{\f}]\,\mathcal L(\b,r,s)
\ee
for $\mathrm{Re}(s)>1+\frac{k}{2}$ and  any  positive integer $l$. This identity also holds if we replace $V_\f$
by any of its subgroups of finite index $V$, in which case, $\E[\b,M]$ has to  be replaced
by $\E[\b,M,V]$, and $[U_\f:V_\f]$ by $[U_\f:V]$.
\end{theorem}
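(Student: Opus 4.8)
The plan is to deduce the stated identity from the key Lemma \ref{det} by folding the lattice sum defining $\Psi_s$ over the orbits of the unit-group action. For $\mathrm{Re}(s)>1+\frac{k}{2}$ I would first write
\[
\Psi_s(\E[\b,M])(P^{l-1},{\bf u},M)=\sum_{x\in{\bf\Lambda}+{\bf u}}\psi(\E[\b,M])(P^{l-1},x)\,\Omega^k_s(x,M),
\]
where $\psi(\E[\b,M])$ is the linear extension of $\psi$ to the chain $\E[\b,M]$. Theorem \ref{convergence-theorem} guarantees absolute convergence in this range, which is what licenses all the rearrangements below; indeed $\mathrm{Re}(s)>1+\frac{k}{2}$ enters the argument only through this requirement.

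The two technical inputs I would establish concern the behaviour of the summand under $\mathfrak{V}=\varrho(V_\f)$ acting by $x\mapsto xA$. From $A=\varrho(\epsilon)=M\delta(\epsilon)M^{-1}$ one gets $AM_i=\rho_i(\epsilon)M_i$, hence $xAM_i=\rho_i(\epsilon)\,xM_i$, so that
\[
\Omega^k_s(xA,M)=\frac{\overline{N_{K/F}(\epsilon)}^{\,k}}{|N_{K/F}(\epsilon)|^{2s}}\,\Omega^k_s(x,M).
\]
Since $\prod_i\rho_i(\epsilon)=N_{K/F}(\epsilon)=1$ for $\epsilon\in V_\f$, the convergence factor is $\mathfrak{V}$-invariant. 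For the cocycle, homogeneity (\ref{homo}) gives $\psi(A\E)(P^{l-1},x)=\det(A)\,\psi(\E)\big((A^TP)^{l-1},xA\big)$; here $\det A=N_{K/F}(\epsilon)=1$, and by (\ref{Action-P-Q}) with $N_{K/F}(\epsilon)=1$ one has $A^TP=P$, so $\psi(A\E)(P^{l-1},x)=\psi(\E)(P^{l-1},xA)$. Thus Lemma \ref{det} takes the form $\sum_{A\in\mathfrak{V}}\psi(\E)(P^{l-1},xA)=\det(M)\,((l-1)!)^n/Q(x)^l$.

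Next I would carry out the orbit decomposition. Under the correspondence $x\leftrightarrow\xi=\sum_i x_i m_i$, the action $x\mapsto xA$ matches $\xi\mapsto\epsilon\xi$, which preserves $\f\b^{-1}+r$ (because $\epsilon\equiv1\bmod\f$ and $r\in\b^{-1}$ force $(\epsilon-1)a\in\f\b^{-1}$) and, since $V_\f$ is torsion-free, acts freely on the nonzero elements. Choosing representatives $R$ for the nonzero orbits and using $\psi(\E)(P^{l-1},0)=0$, I would factor out the invariant $\Omega^k_s(x_0,M)$ and apply Lemma \ref{det} to the inner sum:
\[
\Psi_s(\E)(P^{l-1},{\bf u},M)=\det(M)\,((l-1)!)^n\sum_{x_0\in R}\frac{\Omega^k_s(x_0,M)}{Q(x_0)^l}.
\]
Finally I would identify this remaining sum with $[U_\f:V_\f]\,\mathcal L(\b,r,s)$: writing $x_0M_i=\rho_i(\xi_0)$ and $Q(x_0)=N_{K/F}(\xi_0)$ by (\ref{Q-norm}), the product over embeddings collapses to
\[
\frac{\Omega^k_s(x_0,M)}{Q(x_0)^l}=\frac{\overline{N_{K/F}(\xi_0)}^{\,k}}{N_{K/F}(\xi_0)^l\,|N_{K/F}(\xi_0)|^{2s}}=\lambda(\xi_0)\,N_{K/\Q}((\xi_0))^{-s},
\]
using $N_{K/\Q}(\xi_0)=N_{F/\Q}(N_{K/F}(\xi_0))=|N_{K/F}(\xi_0)|^2$. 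As $R$ represents $(\f\b^{-1}+r)/V_\f$, the sum is exactly the $V_\f$-sum in (\ref{Partial-L-V}), namely $[U_\f:V_\f]\,\mathcal L(\b,r,s)$.

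Granting Lemma \ref{det} — the genuine heart of the matter, deferred to Section \ref{proof-lemma} — the remaining work is organizational rather than deep: verifying that $\mathfrak{V}$ preserves ${\bf\Lambda}+{\bf u}$ and acts freely on its nonzero points, and that absolute convergence (Theorem \ref{convergence-theorem}) justifies the interchange of summations. The only point demanding care is the bookkeeping of the orbit count linking the $/V_\f$ sum to $\mathcal L(\b,r,s)$, which is already recorded in (\ref{Partial-L-V}); I expect no essential obstacle beyond this once the invariance and transformation properties above are in hand.
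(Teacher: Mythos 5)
Your proposal is correct and takes essentially the same route as the paper's own proof: unfold the lattice sum over $\mathfrak{V}$-orbits, use $N_{K/F}(\epsilon)=1$ to get invariance of $\Omega_s^k$ and of $P^{l-1}$ (hence $\psi(\E)(P^{l-1},xA)=\psi(A\E)(P^{l-1},x)$ by homogeneity), apply Lemma \ref{det} to the inner sum, and identify the remaining orbit sum with $[U_\f:V_\f]\,\mathcal L(\b,r,s)$ via (\ref{Q-norm}) and (\ref{Partial-L-V}). The only differences are cosmetic: you spell out the stability of ${\bf \Lambda}+{\bf u}$ under $\mathfrak{V}$, the freeness of the action, and the vanishing at $x=0$, points the paper leaves implicit.
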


\begin{proof}
Let $\E=\E[\b,M]$. We start by expressing the left-hand side of the identity (being in the range of absolute convergence) as 
\begin{align}
\Psi_s(\E)(P^{l-1},{\bf u},M)=&\sum_{x\in{\bf \Lambda+u}}\psi(\E)(P^{l-1},x)\,\Omega_s^k(x,M)\notag\\
=&\sum_{x\in{(\bf \Lambda}+{\bf u})/\mathfrak{V}_{\f}}
       \,\sum_{A\in \mathfrak{V}_{\f}}\psi(\E)(P^{l-1},xA)\,\Omega_s^k(xA,M)\notag\\
=&\sum_{x\in{(\bf \Lambda}+{\bf u})/\mathfrak{V}_{\f}}\Omega_s^k(x,M)\sum_{A\in \mathfrak{V}_{\f}}\psi(\E)(P^{l-1},xA),
\end{align}
where the last equality follows from 
\be
\Omega_s^k(xA,M)=\Omega_s^k(x,M),\quad A\in \mathfrak{V}_{\f}=\varrho(V_{\f}).
\ee
This last identity is in turn a consequence of Lemma \ref{factor} and Equation (\ref{Action-P-Q}) 
together with the fact that 
$N_{F/K}(\epsilon)=1$ for every $\epsilon$ in $V_{\f}$, and  also
$\Omega_s^k(x,M)=\overline{Q(x)}^k/|Q(x)|^{2s}$. 
Next, by the definition of the action of $GL_n$ on $S_0$ 
and the fact that $A^TP^{l-1}=P^{l-1}$ for $A\in \mathfrak{V}_{\f}$, which  can be verified from  (\ref{A_i}) and (\ref{Action-P-Q}), it follows that
\be
\psi(\E)(P^{l-1},xA)=A\psi(\E)(P^{l-1},x).
\ee
Furthermore, since the cocycle is homogeneous, we have
\be
A\psi(\E)(P^{l-1},x)=\psi(A\E)(P^{l-1},x).
\ee
Altogether this gives 
\be
\Psi_s(\E)(P^{l-1},{\bf u},M)
  =\sum_{x\in{\bf \Lambda}+{\bf u}/\mathfrak{V}_{\f}}\Omega_s^k(x,M)
           \sum_{A\in \mathfrak{V}_{\f}}\psi(A\E)(P^{l-1},x).
\ee
Applying Lemma \ref{det}, this expression equals 
\be
\det(M)((l-1)!)^n\sum_{x\in{\bf \Lambda}+{\bf u}/\mathfrak{V}_{\f}}\frac{\Omega_s^k(x,M)}{Q(x)^l}.
\ee
 Finally, we show how the partial $L$-function is expressed in terms of the last expression obtained above. 
First, notice that (\ref{Q-norm}) implies
\be
|Q(x)|^2=|N_{F/K}(\xi)|^2=|N_{K/\Q}(N_{F/K}(\xi))|=|N_{F/\Q}(\xi)|=N_{F/\Q}(\,(\xi)\,),
\ee
for $\xi=\sum x_im_i$, with $x=(x_1,\dots, x_n)\in {\bf \Lambda}+{\bf u}$. Here $N_{F/\Q}(\,(\xi)\,)$ denotes the ideal norm of the principal ideal $(\xi)$. 

Second,  from (\ref{Fb+r}) we have  that as $x$ runs though $({\bf \Lambda}+{\bf u})/\mathfrak{V}_{\f}$, 
 $\xi$ runs through $(\f\b^{-1}+r)/V_\f$. These two observations, together with (\ref{Partial-L-V}), imply
\begin{align}
[U_\f:V_\f]\mathcal L(\b,r,s) 
 &=\sum_{\xi\in (\f \b^{-1}+r)/V_\f}\overline{N_{F/K}(\xi)}^kN_{F/K}(\xi)^{-l}N_{F/\Q}(\,(\xi)\,)^{-s}\notag\\
&=\sum_{x\in({\bf \Lambda}+{\bf u})/\mathfrak{V}_{\f}}\overline{Q(x)}^k Q(x)^{-l} |Q(x)|^{-2s}\notag\\
&=\sum_{x\in({\bf \Lambda}+{\bf u})/\mathfrak{V}_{\f}}\frac{\Omega^k_s(x,M)}{Q(x)^{l}},
\end{align}
and the identity follows.
\end{proof}

As a simple corollary of the above theorem we can parametrize the Hecke $L$-function  $L(s,\chi)$  defined  in (\ref{Main-L-function}) by using Proposition \ref{fullpartial1} thus proving the Theorem \ref{mainthm-intro}. 

\begin{corollary}
\label{parametrization}
Let $\mathfrak{b}$ be an integral ideal of $F$ prime to $\f$. Let $\mathcal{L}_{\f}(\chi,\mathfrak{b};s)$ be the partial Hecke $L$-function as defined \eqref{Partial-Main-L-function}.  Let $M$, ${\bf u}$  and $P$ be as in the beginning of Section \ref{the cycle-section},  with $r=1$. Then, we have
\begin{equation}
-\det(M)\,((l-1)!)^n\,[U_\f:V_{\f}]\,\mathcal{L}_{\f}(\chi,\mathfrak{b};s)
    =
        \frac{\chi(\b)}{N_{F/\Q}(\b)^s}\,
                 \Psi_s(\E[\b,M])(P^{l-1},{\bf u},M),
\end{equation}
for $\mathrm{Re}(s)>1+\frac{k}{2}$. 
\end{corollary}

\begin{proof}
This follows immediately from Proposition \ref{fullpartial1} and Theorem \ref{maintheorem} above.
\end{proof}


\section{Proof of Lemma \ref{lemma6}}

\label{proof-lemma}

\subsection{Strategy of the proof}\label{strategy}

We now provide the proof of the key lemma.  The method is inspired by \cite{Scz}  with some additional  novelties that make the proof more explicit. We start by outlining  Sczech's proof adjusted to our setting and then explain the difference in the approach we are going to take.

For the rest of this article we are going to fix an ideal $\mathfrak{b}$, the matrix $M$, an $x\in {\bf \Lambda+u}$,  a positive integer $l$, and $P$ will always denote the polynomial defined in (\ref{P and Q}). With this convention, sometimes we will simplify the notation  by omiting the pair $(P^{l-1}, x)$ from the cocycle $\psi(\cdot)(P^{l-1},x)$, and the letters $\b, M$ from the cycles $\E[\b,M]$ and $\E[\b,M,V]$.

 We will work with the units $\epsilon_i$ rather than the matrices $A_i$. Using the bar notation we have the relation
\be
U_{\pi}:=[A_{\pi(1)}|\dots|A_{\pi(n-1)}]=M[\delta(\epsilon_{\pi(1)})|\dots|\delta(\epsilon_{\pi(n-1)})]M^{-1}.
\ee
for a permutation $\pi \in S_{n-1}$.


As in Sczech, we introduce the following construction: Consider the subgroup of diagonal matrices in $GL_n(\C)$ with determinant 1, and consider the logarithm map $\ell$ of this subgroup into $\R^n$ given by 
\begin{equation}\label{log}
\textbf{l}:\mathrm{diag}(x_1,\dots,x_n)\mapsto \big(\,2\log |x_{1}|,\dots,2\log|x_n|\,\big).
\end{equation}
The image of this subgroup under this map is a hypersurface $\textbf{H}\subset \R^n$ defined by $\sum y_i=0$. Moreover, this map allows Sczech to make an identification 
\be\label{identif}
(\varrho(\eta_1),\dots,\varrho( \eta_n))
\quad  \longleftrightarrow \quad 
[\,\textbf{l}(\delta(\eta_1)),\dots, \textbf{l}(\delta( \eta_n))\,]
\ee
of an element $(\varrho(\eta_1),\dots,\varrho( \eta_n))\in GL_n(K)^n$, for units $\eta_j\in \mathcal{O}_F^{\times}$, with the oriented simplex $[\,\textbf{l}(\delta(\eta_1)),\dots,\textbf{l}(\delta( \eta_n))\,]$ in $H$  determined by the vertices $v_j=\textbf{l}(\delta(\eta_j))$. The  orientation of this simplex is defined by the sign of 
$\det(e,v_2-v_1,\dots,v_n-v_{n-1})$, where $e=(1,\dots, 1)^T$ ( cf.  \cite{Scz} Equation (26)). 

Now let
\begin{equation}\label{hyper}
\textbf{H}_j=\{y\in \textbf{H}: y_i<0<y_j \text{ for all }i\neq j\},\quad j=1,\dots,n.
\end{equation} 
By the proof of the Dirichlet Unit Theorem, as in  
\cite{Lan}, we can find $n$ units $\theta_j$ in $V_\f$ such that 
$
\textbf{l}(\delta(\theta_j))\in \textbf{H}_j
$
for every $j$. In principle, these units belong to $\mathcal{O}_F^{\times}$, but since $V_\f$ has finite index in $\mathcal{O}_F^{\times}$,
we can replace $\theta_j$ by a positive power of it to guarantee that $\theta_j\in V_\f$. Moreover,  the simplex with vertices $\textbf{l}(\delta(\theta_j))$ is positively oriented for all $j$ (cf. \cite{Scz} Section 3.2, page 596). 

 Sczech uses this construction to define the element $\mathfrak{S}_N\in GL_n(K)^n$ as
   $$\mathfrak{S}_N:=(\varrho(\theta_1^N),\dots, \varrho(\theta_n^N)),\quad N\geq 1,$$
and then shows that
\[
\lim_{N\to \infty}\psi(\mathfrak{S}_N)(P^{l-1},x)=\det(M)\frac{((l-1)!)^n}{Q(x)^l}.
\]
This is the right-hand side of  (\ref{deteq}). In order to relate this to the left-hand side of (\ref{deteq}), Sczech's idea is to consider the element $\mathfrak{I}_N$ such that $\lim_{N\to \infty} \psi(\mathfrak{I}_N)$ is equal to the left-hand side of (\ref{deteq}). More precisely, this element is the sum of all the simplices $\varrho(\eta)U_{\pi}$ for $\eta\in V_{\f}$ and $\pi \in S_{n-1}$ having non-trivial intersection with $\mathfrak{S}_N$ via the identification (\ref{identif}). The rest of the proof consists in showing that $\lim_{N\to \infty}\psi(\mathfrak{S}_N)=\lim_{N\to \infty} \psi(\mathfrak{I}_N)$, which is done by comparing the 
homology classes of $\mathfrak{S}_N$ and $\mathfrak{I}_N$.  

Therefore the comparison of the homology classes of $\mathfrak{S}_N$ and $\mathfrak{I}_N$  is the main component of Sczech's proof. However, it is more natural to compare the homology of cycles than to compare the homology of  just a  collection of simplices which do not necessarily  constitute a cycle.  
Based on this observation, we construct  the following two cycles 
 containing $\mathfrak{S}_N$ and $\mathfrak{I}_N$, and compare their homology instead.

Returning to our setting, let $V$ be the group generated by the units 
\be\label{Units-V}
\epsilon_i:=\theta_{i+1}\theta_i^{-1},\quad i=1,\dots, n-1.
\ee
 Since   $V$ is a subgroup of finite index in 
$V_{\mathfrak{f}}$ we let $\beta:=[V_{\f}:V]$. Let $\E[V]:=\E[\mathfrak{b},M,V]$ and $\rho$ be  defined as in (\ref{cycle-V})
for the units $\epsilon_1,\dots, \epsilon_{n-1}$.
From now on $N$ will denote  a positive integer divisible by $\beta$;
this guarantees that $\theta^N_i\in V$, for $i=1,\dots, n$. We consider the  following two cycles
\begin{equation}\label{VN}
\E_N[V]:=\rho\sum_{\pi\in S_{n-1}} \text{sign}(\pi)[\varrho(\epsilon^N_{\pi(n-1)})|\dots|\varrho(\epsilon^N_{\pi(1)})]
\end{equation}
and 
\be\label{E-star}
\E^*_N[V]:=\sum_{\substack{0\leq k_i\leq N-1\\ i=1,\dots, n-1}} 
           \varrho(\,\epsilon_1^{k_1}\dots \epsilon_{n-1}^{k_{n-1}}\,)\,\E[V].
\ee

 Geometrically, using   the identification (\ref{identif}), these cycles can be interpreted 
 as two different triangulations, the latter finer that the former, of the fundamental domain in $\textbf{H}$ determined 
by the  vertices  
$\textbf{l}(\delta(1)),\textbf{l}(\delta(\epsilon_1^N))$,$\dots$, 
$\textbf{l}(\delta(\epsilon_1^N\cdots\epsilon_{n-1}^N))$.

Notice that   the simplex $\mathfrak{S}_N$ is one of the summands of $\varrho(\theta_1)^N\E_N[V]$---indeed, 
$\mathfrak{S}_N=\varrho(\theta_1^N)\big[\,\varrho(\epsilon^N_{1})|\dots|\varrho(\epsilon^N_{n-1})\,\big]$--- and as such we show in Proposition \ref{Main-prop} that
\be\label{limit-2}
\lim_{N\to \infty}\psi\big(\,\varrho(\theta_1^N)\, \E_N[V]\,\big)(P^{l-1},x)
=\det(M)\frac{((l-1)!)^n}{Q(x)^l}.
\ee
Since $\mathfrak{I}_N$ is contained in the cycle $\varrho(\theta_1^N)\,\E^*_N[V]$, we also show in the same proposition that
\be\label{limit-3}
\lim_{N\to \infty} \psi\big(\,\varrho(\theta_1^N)\,\E^*_N[V]\,\big)(P^{l-1},x)
=\sum_{A \in {\mathfrak{V}}}\psi\big(\,A\, \E[V]\, \big)(P^{l-1},x).
\ee
However, the infinite sum in (\ref{limit-3})  coincides with the left-hand side of   (\ref{deteq})  since,  as it will be shown in Lemma \ref{independence} below, the value of the infinite sum
is independent of the choice of a subgroup $V$ of finite index in $V_\f$.

Therefore, it remains to compare 
\[
\psi(\varrho(\theta_1^N)\E_N[V])\quad \text{ and}\quad \psi(\varrho(\theta_1^N)\E_N^*[V]),
\]
as $N\to \infty$.
According to Lemma 5 of \cite[p.592]{Scz} these two cycles  $\E_N[V]$ and $\E^*_N[V]$ are homologous in $H_{n-1}(\mathfrak{V},\Z)$, but not necessarily in $H_{n-1}(\mathfrak{V},\Z[\mathfrak{V}])$; here $\mathfrak{V}=\varrho(V)$. Therefore $\psi(\varrho(\theta_1^N)\E_N[V])$ is not necessarily equal to $\psi(\varrho(\theta_1^N)\E^*_N[V])$. Nevertheless, we  show in Proposition \ref{Main-prop}  that
\be\label{limit-1}
\lim_{N\to \infty} \psi\big(\,\varrho(\theta_1^N)\,\E_N[V]\,\big)(P^{l-1},x)
=\lim_{N\to \infty} \psi\big(\,\varrho(\theta_1^N)\,\E^*_N[V]\,\big)(P^{l-1},x).
\ee
Putting together  (\ref{limit-2}), (\ref{limit-3}) and (\ref{limit-1}), we obtain (\ref{deteq}). Our goal for  the rest of the article is then to prove these three limits.

\subsection{Invariance of (\ref{deteq})}\label{Invariance-of-sum}
As was mentioned above, we start by showing that (\ref{deteq}) is independent of the choice of a  subgroup $V$ of finite index in $V_\f$. 
\begin{lemma}
\label{independence}
The infinite sum in equation \eqref{deteq}
\be\label{Infinite-sum}
\sum_{A \in {\mathfrak{V}_\f}}\psi\big(\,A\, \E[\b,M]\,\big)(P^{l-1},x)
\ee
is independent of the set of generators of the group $V_\f$. Moreover, the value of sum does not change if we replace $V_\f$ by any subgroup $V$ of  finite index in $V_\f$, i.e.,
\be\label{sum-invariance}
\sum_{A \in {\mathfrak{V}_\f}}\psi\big(\,A\, \E[\b,M]\, \big)(P^{l-1},x)
   =\sum_{A \in \mathfrak{V}}\psi\big(\,A\, \E[\b,M,V]\,\big )(P^{l-1},x),
\ee
where $\mathfrak{V}=\varrho(V)$.
\end{lemma}
\begin{proof}
This follows from the fact that $H_{n-1}(\mathfrak{V}_\f,\mathbb{Z})\simeq \Z$ is cyclic and generated by $[\E]$, the class of $\E:=\E[\mathfrak{b},M]$ in $H_{n-1}(\mathfrak{V}_\f,\mathbb{Z})$ (c.f. Lemma 5 of \cite[p.592]{Scz}). Indeed, let $V$ be a subgroup of $V_\f$ of finite index.  Fix a set of generators $\epsilon_1',\dots, \epsilon'_{n-1}$  of $V$ and let $\tilde{\E}$ denote $\E[\mathfrak{b},M,V]$.
Thus $[\tilde{\E}]=[V_\f:V][\E]$ in $H_{n-1}(\mathfrak{V}_\f,\mathbb{Z})$ and therefore
\begin{align*}
\sum_{A \in \mathfrak{V}}\psi\big(\,A\, \tilde{\E}\,\big )(P^{l-1},x)
=&\frac{1}{[V_\f:V]}\sum_{A \in \mathfrak{V}_\f}\psi\big(\,A\, \tilde{\E}\,\big )(P^{l-1},x)\\
=&\frac{1}{[V_\f:V]}\sum_{A \in \mathfrak{V}_\f}\psi\big(\,A\, \big([V_\f:V]\E\big)\,\big )(P^{l-1},x)\\
=&\frac{1}{[V_\f:V]}\sum_{A \in \mathfrak{V}_\f}[V_\f:V]\,\psi\big(\,A\, \E\,\big)(P^{l-1},x)\\
=&\sum_{A \in \mathfrak{V}_\f}\psi\big(\,A\, \E\,\big )(P^{l-1},x).\\
\end{align*}
\end{proof}

\subsection{Comparing the homology of $\E_N[V]$ and $\E_N^*[V]$}\label{Comparing homology}
The cycles $\E_N[V]$ and $\E_N^*[V]$   are homologous in $H_{n-1}(\mathfrak{V},\Z)$, but not necessarily in $H_{n-1}(\mathfrak{V},\Z[\mathfrak{V}])$. Therefore  $\psi(\varrho(\theta_1^N)\E_N[V])$ is not necessarily equal to $\psi(\varrho(\theta_1^N)\E^*_N[V])$. In this section we will estimate the difference 
\be\label{estim-diff}
\psi(\varrho(\theta_1^N)\E_N[V])-\psi(\varrho(\theta_1^N)\E^*_N[V])
\ee by
 comparing the homology of 
$\E_N[V]$ and $\E_N^*[V]$ in $H_{n-1}(\mathfrak{V},\Z[\mathfrak{V}])$. We will conclude in Lemma \ref{simplex} that
$$
\E_N[V]-\E_N^*[V]\text{\ is homologous in $H_{n-1}(\mathfrak{V},\Z[\mathfrak{V}])$ to\ } \mathfrak{r}_N, 
$$
where $\mathfrak{r}_N$ is an element in $\Z[\mathfrak{V}^{n}]$ being the 
sum of simplices laying on the faces of the  fundamental domain  in $\textbf{H}$ determined by the vertices  
 $$\textbf{l}(\delta(1)),\textbf{l}(\delta(\epsilon_{1}^N)),\dots, \textbf{l}(\delta(\epsilon_{1}^N\cdots \epsilon_{n-1}^N))$$
 via the correspondence (\ref{identif}). 
The precise description of $\mathfrak{r}_N$ is given in Lemma \ref{simplex}. Then, in Lemma \ref{boundary}, we will estimate the value of the cocycle $\psi$ at $\varrho(\theta_1^N)\,\mathfrak{r}_N$ and conclude that 
$
\lim_{N\to \infty} \psi\big(\,\varrho(\theta_1^N)\,\mathfrak{r}_N\,\big)(P^{l-1},x)=0,
$
therefore showing (\ref{limit-1}).

The existence of the element $\mathfrak{r}_N$ is based on an inductive 
argument involving the $k$-dimensional faces ( $k=0$, 1, $\dots$ , $n-1$) of the cycles $\E_N[V]$ and $\E_N^*[V]$, for this reason we will need to introduce
 the following notation which  allows us to simplify 
our computations significantly. In particular, it will give us a convenient way to express the faces of $\E_N[V]$ and $\E_N^*[V]$. 

Let $\epsilon_1,\dots, \epsilon_{n-1}$ be the 
units (\ref{Units-V}). For a subset  
$S=\{\epsilon_{i_1},\dots, \epsilon_{i_k}\}$ of $\epsilon_1\dots, \epsilon_{n-1}$, where 
$1\leq i_1\leq \dots \leq i_k\leq n-1$, we let
\be
\E[S]
:=\rho\sum_{\pi\in S_k} 
\text{sign}(\pi)\big[\varrho\big(\epsilon_{i_{\pi(1)}}\big)\,\big|\dots\,\big|\varrho\big(\epsilon_{i_{\pi(k)}}\big)\,\big],
\ee
where $\rho$ is as in Definition \ref{cycle}, i.e., $
\rho=(-1)^{n-1}\text{sign}(R_{F/K})$
where $R_{F/K}=R(\epsilon_1,\dots, \epsilon_{n-1})$. Additionally, we let $S_N=\{\epsilon_{i_1}^N,\dots, \epsilon_{i_k}^N\}$ and also define
\be
\E_N[S]:=\E[S_N] \quad \text{and} \quad \E_N^*[S]:=\sum_{\substack{0\leq m_i\leq N-1\\ i=1,\,\dots,\,k}} 
           \varrho(\epsilon_{i_1}^{m_1}\dots \epsilon_{i_{k}}^{m_{k}})\,\E[S].
\ee 
When $S$ is the full set $\{\epsilon_1,\dots, \epsilon_{n-1}\}$ we have $\E_N[S]=\E_N[V]$ and $\E_N^*[S]=\E_N^*[V]$. For each $S$, $\E_N[S]$ and $\E_N^*[S]$  correspond to a $k$-dimensional face of $\E_N[V]$ and $\E_N^*[V]$, respectively, via the identification (\ref{identif}).

In order to simplify the notation further, we denote by $S(\epsilon_{i_k})$ the index $k$; thus $S(\epsilon)$ is the position of the element $\epsilon$ in the ordered set $S$. This allows us to express
the boundary map $\partial$ as
\begin{align*}
\partial(\varrho(\epsilon_{i_1}),\dots, \varrho(\epsilon_{i_k}))
&=\sum_{j=1}^k(-1)^j(\varrho(\epsilon_{i_1}),\dots,\widehat{\varrho(\epsilon_{i_{j}})},\dots,\varrho(\epsilon_{i_k}))\\
&=\sum_{\epsilon\in S}(-1)^{S(\epsilon)}(\varrho(\epsilon_{i_1}),\dots,\widehat{\varrho(\epsilon)},\dots,\varrho(\epsilon_{i_k})),
\end{align*}
where $\widehat{\varrho(\epsilon)}$ means the omission of the element $\varrho(\epsilon)$.

Also, if $\epsilon$ and $\epsilon'$ are elements of $S$, we will denote by $S_{\epsilon}$ and $S_{\epsilon,\epsilon'}$ the sets $S\setminus \{\epsilon\}$ and $S\setminus \{\epsilon,\epsilon'\}$, respectively. Moreover, we will denote by $\mathfrak{S}$, $\mathfrak{S}_{\epsilon}$ and $\mathfrak{S}_{\epsilon,\epsilon'}$, respectively, the subgroups  of $\mathfrak{V}$ generated by the sets $\varrho(S)$, $\varrho(S_{\epsilon})$ and $\varrho(S_{\epsilon,\epsilon'})$.

With this notation, we have
\begin{equation}\label{boundary-1}
\partial\big(\,\E_N[S]\,\big)=
\sum_{\epsilon\in S}\, (-1)^{S(\epsilon)}\,(1-\varrho(\epsilon^N))\, \E_N[S_{\epsilon}],
\end{equation}
i.e., the boundary map evaluated at $\E_N[S]$ corresponds  exactly to the sum of all of its faces, $ \E_N[S_{\epsilon}] $ and $\varrho(\epsilon^N) \E_N[S_{\epsilon}]$ for $\epsilon\in S$, with the appropriate orientations. Subsequently we obtain
\begin{equation}\label{boundary-2}
\partial\big(\E^*_N[S]\,\big)=
\sum_{\epsilon\in S} (-1)^{S(\epsilon)}(1-\varrho(\epsilon^N)) \E^*_N[S_\epsilon].
\end{equation}

Finally, for the proof of the next lemma we need to introduce the map $h:\Z[\mathfrak{V}^{i}]\to \Z[\mathfrak{V}^{i+1}]$ ($i\geq 1$) ( c.f. \cite[\S2]{AW}),  such that any tuple $(g_1,\dots, g_i)\in \mathfrak{V}^{i}$ is sent to $(1,g_1,\dots, g_i)$. This map satisfies 
 \begin{equation}\label{partial-h}
 \partial h+h\partial=1.
 \end{equation}

\begin{lemma}\label{simplex}
Let $N\geq 1$ and  $\epsilon_1,\dots, \epsilon_{n-1}$ be the units defined in \eqref{Units-V}. Then there exists an element $\mathfrak{r}_N$ in $\Z[\mathfrak{V}^{n}]$
such that
\[
\E_N[V]\ \sim\ \E^*_N[V]+ \mathfrak{r}_N
\]
in $H_{n-1}(\mathfrak{V},\Z[\mathfrak{V}])$, i.e., $\E_N[V]= \E^*_N[V]+ \mathfrak{r}_N$ $\normalfont \text{mod}$ $\partial(\Z[\mathfrak{V}^{n+1}])$.

Moreover,  $\mathfrak{r}_N$  is generated by  $O(N^{n-2})$ elements---the growth depending only on $n$---each one  of the form
\be\label{elem-gen}
(\varrho(\eta_1^{(i)}),\dots,\varrho(\eta_{n}^{(i)}))
\quad \text{or } \quad 
\varrho(\epsilon_i^N)(\varrho(\eta_1^{(i)}),\dots,\varrho(\eta_{n}^{(i)})) 
\ee
for some $1\leq i \leq n-1$, where the units $\eta_1^{(i)},\dots,\eta_n^{(i)}$ are of the form
\be\label{special-unit}
\eta_j^{(i)}
=
\epsilon_1^{s_{1j}}
       \cdots 
          \widehat{\epsilon_i}^{s_{ij}} 
            \cdots 
            \epsilon_{n-1}^{s_{n-1j}} 
             \hspace{25pt} (1\leq j \leq n)
\ee
for integers $0\leq s_{1j},\dots, s_{n-1 j}\leq N$. Here  $ \widehat{\epsilon_i}$ means the omission of the unit $\epsilon_i$. 
\end{lemma}

\begin{proof}
 
The  proof will be a  particular case, namely $S=\{\epsilon_1,\dots, \epsilon_{n-1}\}$, of the following more general result. For any subgroup  $S=\{\epsilon_{i_1},\dots, \epsilon_{i_k}\}$ of $\{\epsilon_1,\dots, \epsilon_{n-1}\}$, with $1\leq k \leq n-1$, we will show the following:
\begin{enumerate}
\item There exist elements $\mathfrak{r}_N(S)\in \Z[\mathfrak{V}^{k+1}]$ and
$\beta_N(S)\in \Z[\mathfrak{V}^{k+2}]$ such that 
\be
\E_N[S]-\E^*_N[S]
=\mathfrak{r}_N(S)+\partial(\beta_N(S)),
\ee
satisfying  
\[
\mathfrak{r}_N(S)=\sum_{\epsilon\in S}\,(-1)^{S(\epsilon)}(1-\varrho(\epsilon)^N)\,\beta_N(S_{\epsilon}).
\]
 \item More specifically, $\beta_N(S)$ belongs  to the ring $\Z[\mathfrak{S}^{k+2}]$ and the canonical representation of $\beta_N(S)$ in this last ring is of the form
\begin{equation}\label{canonical}
\sum_{\overline{j}=(j_1,\dots, j_{k+2})\in J}m_{\overline{j}}\,(g_{j_1},\dots, g_{j_{k+2}}),
\end{equation}
where $m_{\overline{j}}\in \Z$ and each  $g_j$ is of the form $\varrho(\epsilon_{i_1}^{s_1}\cdots \epsilon_{i_k}^{s_k})$ with $0\leq s_1, \dots, s_k\leq  N$.  
\item Moreover,
the number of canonical generators appearing in the representation (\ref{canonical}) of $\beta_N(S)$ is
 of the  order $O(N^k)$, i.e., $|J|=O(N^k)$.
 \end{enumerate}

The construction is done inductively on the number of elements of the subset $S$. If $|S|=1$, then 
\be
\partial(\E_N[\epsilon_1]-\E^*_N[\epsilon_1])=0,
\ee
so $\mathfrak{r}(\epsilon_1)=0$, $\beta_N(\emptyset)=0$ and 
$\beta_N(\epsilon_1)
  =h\big(\,\E_N[\epsilon_1]-\E^*_N[\epsilon_1]\,\big)
  \in \Z[\varrho( \langle \epsilon_1 \rangle)^3 ]$.
Moreover, since $\E^*_N[\epsilon_1]$ has $N$ elements, 
then the number of generators of $\beta_N(\epsilon_1)$
is of the order $O(N)$, as $N\to \infty$.

Suppose now that the result is true for any subset of order less than $n$ and let $S=\{\epsilon_1\dots, \epsilon_n\}$. From Equations (\ref{boundary-1}) and (\ref{boundary-2}) we have
\begin{equation}\label{tomata}
\partial\big(\,\E_N[S]-\E^*_N[S]\,\big)=
\sum_{\epsilon\in S} (-1)^{S(\epsilon)}(1-\varrho(\epsilon^N)) \left(\,\E_N[S_{\epsilon}]
-\E^*_N[S_\epsilon]\,\right).
\end{equation}
Since $|S_\epsilon|=n-1$
 there exist  elements 
 $\beta_N(S_{\epsilon})
 \in \Z[\mathfrak{S}_{\epsilon}^{n+1}]$ such that
\be\label{fuck}
\E_N[S_{\epsilon}]
        -\E^*_N[S_{\epsilon}]
=\sum_{\epsilon'\in S_{\epsilon}}^n(-1)^{S_{\epsilon}(\epsilon')}\,
     (1-\varrho(\epsilon'^{N}))
     \,\beta_N(S_{\epsilon,\epsilon'})
           +\partial(\beta_N(S_{\epsilon})),
\ee
where $\beta_N(S_{\epsilon,\epsilon'})\in \Z[\mathfrak{S}_{\epsilon,\epsilon'}^{n}]$. 
Replacing $\E_N[S_{\epsilon}]
-\E^*_N[S_\epsilon]$  in (\ref{tomata}) with the expression on the right-hand side of (\ref{fuck})
 we get
\be
\sum_{\epsilon \in S}\sum_{\epsilon'\in S_{\epsilon}}(-1)^{S(\epsilon)+S_{\epsilon}(\epsilon')}
    \,(1-\varrho(\epsilon^N))\,
   (1-\varrho(\epsilon'^N))
   \,\beta_N(S_{\epsilon,\epsilon'})\ 
+\ \sum_{\epsilon\in S}\partial\left(\,(-1)^{S(\epsilon)}(1-\varrho(\epsilon^N)) \beta_N(S_{\epsilon})\right).
\ee
But the double sum  is equal to zero since for every pair of elements $\epsilon\neq \epsilon'$ of $S$ the term $(1-\varrho(\epsilon^N))(1-\varrho(\epsilon'^N))\beta_N(S_{\epsilon,\epsilon'})$ appears twice in the  sum,  but with opposite signs. Thus 
\be
\partial\left(\,\E_N[S]-\E^*_N[S]\,-\sum_{\epsilon\in S}(-1)^{S(\epsilon)}(1-\varrho(\epsilon^N))\beta_N(S_{\epsilon}) \right)=0.
\ee
Therefore we define 
$\beta_N(S)
\in \Z[\mathfrak{S}^{n+2}]$ as 
\be\label{element}
\beta_N(S):=h\left( \E_N[S]-\E^*_N[S]\,-\sum_{\epsilon\in S}(-1)^{S(\epsilon)}(1-\varrho(\epsilon^N))\beta_N(S_{\epsilon}) \right)
\ee
and the first part of the lemma is proven by (\ref{partial-h}). For the second part, observe that the number of generators of $\beta_N(S)$ is of the order $O(N^{n})$,
 since $\E^*_N[S]$ 
 has $N^{n}$ generators, and 
 $\beta_N(S_{\epsilon})$ 
 has $O(N^{n})$ generators by the induction hypothesis.
\end{proof}

\subsection{Estimating \eqref{estim-diff}}\label{Estimating difference}
 For a fixed $x\in {\bf \Lambda+u}$, 
$P$ as  in (\ref{P and Q}) and  a positive integer $l$, our goal  now is to show that   
$$\lim_{N\to \infty} \psi\left(\,\varrho(\theta_1^N)\, \mathfrak{r}_N\,\right)(P^{l-1},x)=0.$$
In order to do this, we will first estimate the values of $\psi$ at every one of the 
simplices contained in $\varrho(\theta_1^N) \mathfrak{r}_N$. 
According to Lemma \ref{simplex}, $\varrho(\theta_1^N)\, \mathfrak{r}_N$ 
is generated by $O(N^{n-2})$ elements $(A_1,\dots, A_n)\in GL_n(K)^n$ 
which are
either of the form
\begin{equation}\label{simplexx-1}
\big(\,
    \varrho\big(\,\theta_1 ^N\,\eta_1^{(i)}\,\big)
     \, ,\dots,\,
      \varrho\big(\theta_1 ^N\,\eta_n^{(i)}\big)
\,\big)
=M\,\big(\,
    \delta\big(\theta_1 ^N\,\eta_1^{(i)}\big)
          \, ,\dots,\,
              \delta\big(\theta_1 ^N\,\eta_n^{(i)}\big)\,\big)
                \,M^{-1}
\end{equation}
or
\begin{equation}\label{simplexx-2}
\big(\,
 \varrho\big(\theta_1 ^N\,\epsilon _i^N \eta_1^{(i)}\big)\,
   ,\dots,\,
   \varrho\big(\theta_1 ^N\,\epsilon_i^N\,\eta_n^{(i)}\big)
 \,\big)
=M\,
\big(\,
 \delta\big(\theta_1 ^N\,\epsilon _i^N \eta_1^{(i)}\big)\,
   ,\dots,\,
   \delta\big(\theta_1 ^N\,\epsilon_i^N\,\eta_n^{(i)}\big)
 \,\big)
 \,M^{-1},
\end{equation}
for some $1\leq i \leq n-1$. Here the  units $\eta_1^{(i)},\dots, \eta_{n}^{(i)}$ are of the form (\ref{special-unit}).
We will estimate the value of $\psi$ at both (\ref{simplexx-1}) and (\ref{simplexx-1}) in  Lemma \ref{boundary}.

Additionally, we will need to  make the following further 
assumptions on the units $\theta_1,\dots, \theta_n$ for the 
proofs of  Lemma \ref{boundary} and Proposition \ref{Main-prop}. 
For every  $1\leq i\leq n-1$ we will assume that the norm of  
$\textbf{l}(\delta(\theta_{i+1}))$ is much larger with respect to the norms 
of each one of the vectors  $\textbf{l}(\delta(\theta_1)),\dots, \textbf{l}(\delta(\theta_i)) $. 
To be more specific, for all  $1\leq i\leq n-1$ we may assume, 
by replacing $\theta_{i+1}$ for $\theta_{i+1}^s$ for a large integer $s$,  that
\be\label{extra-assump}
\textbf{l}(\delta(\epsilon_i)),\textbf{l}(\delta(\epsilon_{i-1}\epsilon_i)),\dots,\textbf{l}(\delta(\epsilon_{1}\cdots\epsilon_i)) \in \textbf{H}_{i+1},
\ee 
\be\label{extra-assump-1.5}
\textbf{l}(\delta(\theta_1\,\epsilon_i)),\textbf{l}(\delta(\theta_1\,\epsilon_{i-1}\epsilon_i)),\dots,\textbf{l}(\delta(\theta_1\epsilon_{1}\cdots\epsilon_i)) \in \textbf{H}_{i+1},
\ee
and
\be\label{extra-assump-2}
|\rho_1(\theta_1\,\epsilon_i)|\leq |\rho_1(\theta_i)|. 
\ee
With these assumptions we are now ready to prove the following result which establishes the Equation (\ref{limit-1}).
\begin{lemma}\label{boundary}
Fix $x\in {\bf \Lambda+u}$.  Let  $P$  be as  in \eqref{P and Q} and $l$ be a positive integer. Let $\mathfrak{r}_N$ be as in Lemma \ref{simplex}.  Then
\be\label{hom-limit}
\lim_{N\to \infty} \psi\left(\,\varrho(\theta_1^N)\, \mathfrak{r}_N\,\right)(P^{l-1},x)=0.
\ee
\end{lemma}
\begin{proof}
Recall that $\varrho(\theta_1^N)\,\mathfrak{r}_N$ has $O(N^{n-2})$
generators $(A_1,\dots, A_n)\in GL_n(K)^n$  which are either of the from (\ref{simplexx-1}) or (\ref{simplexx-2}) for some $1\leq i \leq n-1$.  
The crucial step in  the proof is to obtain the following estimate
\begin{equation}\label{estimation}
\psi(A_1,\dots, A_n)(P^{l-1},x\,)=O(t^N) \quad  (\, N\to \infty\, ),
\end{equation}
where $t=\max_{1\leq k\neq j\leq n}\big\{|\rho_k(\theta_j)|\big\}<1.$   The constant in the $O$ notation depends only on $x$ and $M$. From (\ref{estimation})  we can deduce that
\[
 \psi\left(\,\varrho(\theta_1^N)\, \mathfrak{r}_N\,\right)(P^{l-1},x)
 =O\big(\,N^{n-2}\,t^N\,\big) \quad (\, N\to \infty\,),
\]
which proves (\ref{hom-limit}). 

Thus, it will be enough to show (\ref{estimation}). For this, fix an $1\leq i \leq n-1$. Let $\eta_1^{(i)},\dots, \eta_n^{(i)}$ be the units corresponding to $(A_1,\dots, A_n)$ in either case (\ref{simplexx-1}) or (\ref{simplexx-2}).  Here each $\eta_j^{(i)}$ of the form $\epsilon_1^{s_{1j}}
             \cdots 
             \widehat{\epsilon}_i^{\,s_{ij}} 
             \cdots 
             \epsilon_{n-1}^{s_{n-1j}}$  as  in (\ref{special-unit}). 
           By  (\ref{Action-of-A-on-f})  we have
$
\psi(A_1,\dots, A_n)(P^{l-1},x\,)=\det(M)\,\psi(\mathfrak{B})(M^TP^{l-1},xM\,),
 $ 
where  $\mathfrak{B}$ is the $n$-tuple of matrices $(B_1,\dots, B_n)$ determined by
 \be\label{simplexe-1}
             \mathfrak{B}=\big(\,\delta(\theta_1^N\,\eta_1^{(i)})\,,\dots,\, \delta(\theta_1^N\,\eta_n^{(i)})\,\big)M^{-1}\,,
             \ee
 in the case  (\ref{simplexx-1}),            or
             \be\label{simplexe-2}
             \mathfrak{B}=\big(\,\delta(\theta_1^N\,\epsilon_i^N\,\eta_1^{(i)})
             \,,\dots,\, \delta(\theta_1^N\,\epsilon_i^N\,\eta_n^{(i)})\,\big)M^{-1}\,,
             \ee 
             in the case   (\ref{simplexx-2}). Therefore it is enough to show, for (\ref{simplexe-1}) and (\ref{simplexe-2}), that
\be\label{estimation-other}
\psi(\mathfrak{B})(M^TP^{l-1},xM\,)=O(t^N) \quad (N\to \infty).
\ee

If we let  $(B_{1j_1},\dots,B_{nj_n})$ be the matrix from the definition of $\psi$ (cf. (\ref{Def-of-psi})), i.e., $B_{kj_k}$ is the first 
column of the matrix $B_{k}$ such that 
$\langle xM, B_{kj_k} \rangle\neq 0$, 
then $$\psi(\mathfrak{B})(M^TP^{l-1},xM\,)=f(B_{1j_1},\dots,B_{nj_n})(M^TP^{l-1},xM\,).$$ Moreover, since $f$ is homogeneous by its definition,   we can replace $(B_{1j_1},\dots,B_{nj_n})$ with any matrix $\sigma$ obtained by re-scaling each one of  the columns  $B_{kj_k}$; this allows us to work  with $f(\sigma)(M^TP^{l-1},xM\,)$ instead.  Such a choice of matrix $\sigma$ will be specified below. Thus in order to show  (\ref{estimation-other}), we will bound $f(\sigma)(M^TP^{l-1},xM\,)$ by estimating each one of the terms in the equivalent expression  (\ref{Pr}). In particular,  the key step will be to show that $\det(\sigma)=O(t^N)$, as $N\to \infty$.

This estimate on $\det(\sigma)$ will be deduced  from the fact that   
 $\epsilon_i$ is missing in the decomposition of 
 all the units $\eta_j^{(i)}$.  
In the case (\ref{simplexe-1})  this will be obtained
 by expanding the determinant $\det(\sigma)$ along 
 the $(i+1)$th row, and in the  case (\ref{simplexe-2}) 
 by expanding $\det(\sigma)$ along the first row.         
            
            We start with the following observation: Since  $\textbf{l}(\delta(\epsilon_k))\in H_{k+1}$, for $k=1,\dots, n-1$ (cf. (\ref{extra-assump})), then $\textbf{l}(\delta(\epsilon_k^{s}))\in \textbf{H}_{k+1}$ for $s\geq 0$. Thus it  follows, from the very definition of the $\textbf{H}_k$'s,  that the first and $(i+1)$th diagonal elements of 
\[\textbf{l}(\delta(\eta_j^{(i)}))
=
\textbf{l}
\big(\,
\delta(
      \epsilon_1^{s_{1j}}
             \cdots 
             \widehat{\epsilon}_i^{\,s_{ij}} 
             \cdots 
             \epsilon_{n-1}^{s_{n-1j}}
      )\,
\big)
\] 
are negative; bearing in mind that the  $s_{kj}$'s are non-negative integers. This implies that
\be\label{ine}
|\rho_1(\eta_j^{(i)})|<1 \quad \text{and}\quad |\rho_{i+1}(\eta_j^{(i)})|<1. 
\ee 

 From  (\ref{ine}), we can bound the $(i+1)$th component of each one of the diagonal matrices $\delta(\theta_1^N\,\eta_j^{(i)})$ in (\ref{simplexe-1}) as
\be\label{bound-i+1-row}
\big|\rho_{i+1}(\theta_1^{N} \,\eta_j^{(i)})\big|
\leq |\rho_{i+1}(\theta_1)^N|\leq t^N,
\ee
and   in view of (\ref{extra-assump-2}), we can also bound  the first component of each  one of the diagonal 
 matrices $\delta(\theta_1^N\,\epsilon_i^N\,\eta_j^{(i)})$ in (\ref{simplexe-2}) as
\be\label{bound-1-row}
\big|\rho_{1}(\theta_1^{N}\epsilon_i^N \eta_j^{(i)})\big|
\leq |\rho_{1}(\theta_1 \, \epsilon_i)|^N\leq |\rho_{1}(\theta_{i+1})|^N\leq t^N.
\ee

Moreover, since both $\textbf{l}\big(\delta(\theta_1^N
      \eta_j^{(i)}
      )\big)$ and $\textbf{l}\big(\delta(\theta_1^N\,\epsilon_i^N\,
      \eta_j^{(i)}
      )\big)$ lie  in  the hypersurface $\textbf{H}$ at least one of its diagonal   elements must be positive.  This implies that 
\be
\gamma_k:=\max_{1\leq r \leq n}
    \big\{\,|\rho_r(\theta_1^N\,\eta_j^{(i)})|\,\big\}>1
    \quad \text{and}\quad 
    \omega_k:=\max_{1\leq r \leq n}
    \big\{\,|\rho_r(\theta_1^N\,\epsilon_i^N\,\eta_j^{(i)})|\,\big\}> 1.
\ee

As was mentioned  previously, we can work with the re-scaled matrix $\sigma=(\sigma_1,\dots,\sigma_n)$, where  
\be\label{re-scale}
\sigma_k:=B_{kj_k}/\gamma_k\quad \text{or} \quad  \sigma_k:=B_{kj_k}/\omega_k
\ee
in the cases (\ref{simplexe-1}) and  (\ref{simplexe-2}), respectively. Thus, $$\psi(\mathfrak{B})(M^TP^{l-1},xM\,)=f(\sigma)(M^TP^{l-1},xM\,).$$ 
In order to obtain (\ref{estimation-other}) we will estimate each one of the terms of the equivalent expression (\ref{Pr}) for $f(\sigma)(M^TP^{l-1},xM\,)$. 

The purpose of the re-scaling (\ref{re-scale}) is that each component of the matrix $\sigma$  is of the form $O(1)$, where the bound depends on $M$ only. From this we conclude first  that $1/\langle xM,\sigma_k\rangle=O(1)$ as $N\to \infty$. 
Second, that the coefficients $P_r(\sigma)$ are $O(1)$ as well: Indeed for $P$ as in (\ref{P and Q}), we have $M^TP^{l-1}(x)=P^{l-1}(xM^T)=(x_1\cdots x_n)^{l-1}$, thus the coefficients $P_r(\sigma)$ in (\ref{Pr}) and (\ref{Pr-2}) are of the form (\ref{Pr-coeff}). 

Finally,  observe that the $(i+1)$th row of the matrix $\sigma $ is of 
the order $O(t^N)$ by (\ref{bound-i+1-row}) in the case (\ref{simplexe-1}),  and the first row of the matrix $\sigma $ is of 
the order $O(t^N)$ by (\ref{bound-1-row}) in the case (\ref{simplexe-2}). Thus, by expanding the determinant of $\sigma$ along the  $(i+1)$th row, and first row, respectively, we see that $\det(\sigma)=O(t^N)$, as $N\to \infty$. Hence (\ref{estimation-other}), and consequently  (\ref{estimation}),  follows.
\end{proof}
\subsection{Proof of the limits (\ref{limit-2}), (\ref{limit-3}) and (\ref{limit-1})}\label{Three limits}
Putting all the above lemmas together we finally obtain the following.
\begin{proposition}\label{Main-prop} Fix $x\in {\bf \Lambda+u}$, $P$ as  in \eqref{P and Q} and $l$ a positive integer. Then three limits \eqref{limit-2}, \eqref{limit-3} and \eqref{limit-1} hold.
\end{proposition}
\begin{proof}
By Lemma \ref{simplex} we have
 $
\E_N[V]\sim \E_N^*[V]+\mathfrak{r}_N, 
$
 and by Lemma  \ref{boundary}
 \be
\lim_{N\to \infty} \psi\left(\,\varrho(\theta_1)^N\, \mathfrak{r}_N\,\right)(P^{l-1},x)=0,
\ee
thus the  limit (\ref{limit-1}) follows.

 The  limit (\ref{limit-2}) is clear since  from the 
very definition of the cycle $\E^*_N[V]$ we have
\be
\psi\left(\,\varrho(\theta_1^N)\,\E^*_N[V]\,\right)
=\sum_{\substack{0\leq k_i\leq N-1\\ i=1,\,\dots,\,n-1}} 
           \psi\left(\varrho(\theta_1^N\epsilon_1^{k_1}\dots \epsilon_{n-1}^{k_{n-1}})\,\E[V]\,\right).
\ee

Finally, we prove the  limit (\ref{limit-3}).
For a permutation
$\pi$ in $S_{n-1}$, we will show that $\lim_{N\to\infty}\psi\big(\, \varrho(\theta_1^N)\,U_{\pi}\, \big)(P^{l-1},x)$ is equal to zero if $\pi\neq id$ and equal to  $\det(M) ((l-1)!)^n/Q(x)^l$ if $\pi=id$, therefore proving  (\ref{limit-3}).

We start by observing that  the assumption (\ref{extra-assump})   implies 
\[
\textbf{l}\big(\,\delta\big(\theta_1^N\epsilon_{\pi(1)}^N\cdots\epsilon_{\pi(i)}^N\big)\,\big)\in \textbf{H}_{k_i+1}
\]
where $k_i$ is the largest integer among $\pi(1),\dots, \pi(i)$. This implies
\be
\delta(\theta_1^N\epsilon_{\pi(1)}^N\cdots\epsilon_{\pi(i)}^N\big)
\to E_{k_i+1}, \quad (N\to \infty)
\ee
 where $E_k$ denoted the $n\times n$ matrix with a 1 in the $k$th diagonal component and zeros everywhere.
Thus modulo projective equivalence
\be
\varrho(\theta_1^N)\,U_{\pi}\to M(E_1,E_{k_1+1},\dots, E_{k_{n-1}+1})M^{-1}.
\ee
Therefore 
\begin{equation*}
\lim_{N\to\infty}\psi\big(\, \varrho(\theta_1^N)\,U_{\pi}\, \big)(P^{l-1},x)=
\psi\big(\,M \,(E_1,E_{k_1+1},\dots, E_{k_{n-1}+1})\,M^{-1}\,\big)(P^{l-1},x).
\end{equation*}
By the definition of $\psi$ the above expression equals $f(ME_{\pi})(P^{l-1},x)$, where $E_{\pi}$ is the matrix whose first column is the first column of $E_1$, and its $(i+1)$th column, $1\leq i \leq n-1$,
is the $(k_i+1)$th column of $E_{k_i+1}$. On the other hand, $f(ME_{\pi})(P^{l-1},x)=\det(M)f(E_{\pi})(M^{T}P^{l-1}, xM)$ by  (\ref{Action-of-A-on-f}),  and so it equals 
\begin{equation}\label{limit-formula}
\det(M)\,(-1)^{n(l-1)}(\partial_{\xi_1}\dots\partial_{\xi_n})^{l-1}f(E_{\pi})(xM)
\end{equation}
where $(\xi_1,\dots, \xi_n)=xM$ by (\ref{Def-of-f}). 

 If $\pi\neq id$ then (\ref{limit-formula}) is zero by (\ref{Pr}) since $\det(E_{\pi})=0$; in this case at least two of the matrices $E_{k_1+1},\dots, E_{k_{n-1}+1}$ are repeated. As for $\pi=id$, we have that $E_\pi=1_n$ and (\ref{limit-formula}) becomes $\det(M) ((l-1)!)^n/Q(x)^l$. The third limit follows.
\end{proof}

\noindent\emph{Acknowledgments.} The authors would like to thank Robert Sczech for discussions concerning his work, and the referee for a careful reading of the paper and for helpful comments. The third author also thanks Pierre Charollois for comments on a preliminary version of this paper.

\bibliographystyle{alpha}
\bibliography{Eiscocycle}

\begin{thebibliography}{DKV18}

\bibitem[AW67]{AW}
M.~F. Atiyah and C.~T.~C. Wall.
\newblock Cohomology of groups.
\newblock In {\em Algebraic {N}umber {T}heory ({P}roc. {I}nstructional {C}onf.,
  {B}righton, 1965)}, pages 94--115. Thompson, Washington, D.C., 1967.

\bibitem[Bar78]{Bar}
Daniel Barsky.
\newblock Fonctions zeta {$p$}-adiques d'une classe de rayon des corps de
  nombres totalement r\'eels.
\newblock In {\em Groupe d'{E}tude d'{A}nalyse {U}ltram\'etrique (5e ann\'ee:
  1977/78)}, pages Exp. No. 16, 23. Secr\'etariat Math., Paris, 1978.

\bibitem[CD14]{CD}
Pierre Charollois and Samit Dasgupta.
\newblock Integral {E}isenstein cocycles on {${\bf GL}\sb n$}, {I}: {S}czech's
  cocycle and {$p$}-adic {$L$}-functions of totally real fields.
\newblock {\em Camb. J. Math.}, 2(1):49--90, 2014.

\bibitem[CDG15]{CDG}
Pierre Charollois, Samit Dasgupta, and Matthew Greenberg.
\newblock Integral {E}isenstein cocycles on {$\bold{GL}_n$}, {II}: {S}hintani's
  method.
\newblock {\em Comment. Math. Helv.}, 90(2):435--477, 2015.

\bibitem[CN79]{CN}
Pierrette Cassou-Nogu{\`e}s.
\newblock Valeurs aux entiers n\'egatifs des fonctions z\^eta et fonctions
  z\^eta {$p$}-adiques.
\newblock {\em Invent. Math.}, 51(1):29--59, 1979.

\bibitem[Col89]{Col}
P.~Colmez.
\newblock Alg\'ebricit\'e de valeurs sp\'eciales de fonctions {$L$}.
\newblock {\em Invent. Math.}, 95(1):161--205, 1989.

\bibitem[CS92]{CS}
Pierre Colmez and Leila Schneps.
\newblock {$p$}-adic interpolation of special values of {H}ecke
  {$L$}-functions.
\newblock {\em Compositio Math.}, 82(2):143--187, 1992.

\bibitem[Dam70]{D}
R.~M. Damerell.
\newblock {$L$}-functions of elliptic curves with complex multiplication. {I}.
\newblock {\em Acta Arith.}, 17:287--301, 1970.

\bibitem[DKV18]{DKV}
Samit Dasgupta, Mahesh Kakde, and Kevin Ventullo.
\newblock On the {G}ross-{S}tark conjecture.
\newblock {\em Ann. of Math. (2)}, 188(3):833--870, 2018.

\bibitem[DR80]{DR}
Pierre Deligne and Kenneth~A. Ribet.
\newblock Values of abelian {$L$}-functions at negative integers over totally
  real fields.
\newblock {\em Invent. Math.}, 59(3):227--286, 1980.

\bibitem[DS16]{DM}
Samit Dasgupta and Michael Spie{\ss}.
\newblock The {E}isenstein cocycle and {G}ross's tower of fields conjecture.
\newblock {\em Ann. Math. Qu\'e.}, 40(2):355--376, 2016.

\bibitem[FKW]{FKW}
Jorge Florez, Cihan Karabulut, and Tian~An Wong.
\newblock Eisenstein cocycles over imaginary quadratic fields {II}: {$p$}-adic
  {$L$}-functions. {\em in preparation}.

\bibitem[Har87]{Ha1}
G.~Harder.
\newblock Eisenstein cohomology of arithmetic groups. {T}he case {${\rm
  GL}_2$}.
\newblock {\em Invent. Math.}, 89(1):37--118, 1987.

\bibitem[Har90]{Ha2}
G\"unter Harder.
\newblock Some results on the {E}isenstein cohomology of arithmetic subgroups
  of {${\rm GL}_n$}.
\newblock In {\em Cohomology of arithmetic groups and automorphic forms
  ({L}uminy-{M}arseille, 1989)}, volume 1447 of {\em Lecture Notes in Math.},
  pages 85--153. Springer, Berlin, 1990.

\bibitem[HS85]{HS}
G.~Harder and N.~Schappacher.
\newblock Special values of {H}ecke {$L$}-functions and abelian integrals.
\newblock In {\em Workshop {B}onn 1984 ({B}onn, 1984)}, volume 1111 of {\em
  Lecture Notes in Math.}, pages 17--49. Springer, Berlin, 1985.

\bibitem[Kat75]{K}
Nicholas~M. Katz.
\newblock {$p$}-adic {$L$}-functions via moduli of elliptic curves.
\newblock In {\em Algebraic geometry ({P}roc. {S}ympos. {P}ure {M}ath., {V}ol.
  29, {H}umboldt {S}tate {U}niv., {A}rcata, {C}alif., 1974)}, pages 479--506.
  Amer. Math. Soc., Providence, R. I., 1975.

\bibitem[Lan94]{Lan}
Serge Lang.
\newblock {\em Algebraic number theory}, volume 110 of {\em Graduate Texts in
  Mathematics}.
\newblock Springer-Verlag, New York, second edition, 1994.

\bibitem[Oba00]{Oba}
Razan Obaisi.
\newblock {\em Eisenstein cocycles for {GL}(2) and special values of {H}ecke
  {L}-functions over imaginary quadratic fields}.
\newblock ProQuest LLC, Ann Arbor, MI, 2000.
\newblock Thesis (Ph.D.)--Rutgers The State University of New Jersey - Newark.

\bibitem[O'M63]{Ome}
O.~T. O'Meara.
\newblock {\em Introduction to quadratic forms}.
\newblock Die Grundlehren der mathematischen Wissenschaften, Bd. 117. Academic
  Press, Inc., Publishers, New York; Springer-Verlag,
  Berlin-G\"ottingen-Heidelberg, 1963.

\bibitem[Scz92]{Scz2}
Robert Sczech.
\newblock Eisenstein cocycles for {${\rm GL}\sb 2{\bf Q}$} and values of
  {$L$}-functions in real quadratic fields.
\newblock {\em Comment. Math. Helv.}, 67(3):363--382, 1992.

\bibitem[Scz93]{Scz}
Robert Sczech.
\newblock Eisenstein group cocycles for {${\rm GL}\sb n$} and values of
  {$L$}-functions.
\newblock {\em Invent. Math.}, 113(3):581--616, 1993.

\bibitem[VsM74]{VM}
M.~M. Vi\v~sik and Ju.~I. Manin.
\newblock {$p$}-adic {H}ecke series of imaginary quadratic fields.
\newblock {\em Mat. Sb. (N.S.)}, 95(137):357--383, 471, 1974.

\bibitem[Wei76]{W}
Andr{\'e} Weil.
\newblock {\em Elliptic functions according to {E}isenstein and {K}ronecker}.
\newblock Springer-Verlag, Berlin-New York, 1976.
\newblock Ergebnisse der Mathematik und ihrer Grenzgebiete, Band 88.

\bibitem[Wes88]{Wesel}
Uwe Weselmann.
\newblock Eisensteinkohomologie und {D}edekindsummen f\"ur {${\rm GL}_2$}
  \"uber imagin\"ar-quadratischen {Z}ahlk\"orpern.
\newblock {\em J. Reine Angew. Math.}, 389:90--121, 1988.

\end{thebibliography}
\end{document}